\newtheorem{thm}{Theorem}[section]
\newtheorem{lem}[thm]{Lemma}
\newtheorem{prop}[thm]{Proposition}
\theoremstyle{definition}
\newtheorem{pr}[thm]{Problem}
\theoremstyle{remark}
\numberwithin{equation}{section}
\newcommand{\set}[1]{\big\{#1\big\}}
\def\st{:\,}
\newcommand{\id}{\gamma^\text{\tiny ID}}
\newcommand{\loc}{\gamma^\text{\tiny LOC}}
\newcommand{\cg}[2]{C_{#1}(#2)}
\newcommand{\ceil}[1]{\left\lceil #1\right\rceil}
\renewcommand{\d}{{\mathrm d}}
\def\sh{\gamma}
\def\iso{\cong}
\def\Z{\mathbb{Z}}
\def\le{\leqslant}
\def\ge{\geqslant}
\begin{document}

\begin{frontmatter}

\title{Locating and Identifying Codes in Circulant Networks}

\author[mg]{M. Ghebleh}
\address[mg]{Department of Mathematics,
             Faculty of Science,
             Kuwait University,
             State of Kuwait}
\ead[mg]{mamad@sci.kuniv.edu.kw}

\author[ln]{L. Niepel}
\address[ln]{Department of Computer Science,
             Faculty of Science,
             Kuwait University,
             State of Kuwait}
\ead[ln]{niepel@sci.kuniv.edu.kw}

\begin{abstract}
A set $S$ of vertices of a graph $G$ is a dominating set of $G$ if
every vertex $u$ of $G$ is either in $S$ or it has a neighbour in~$S$.
In other words $S$ is dominating if the sets $S\cap N[u]$ where $u\in V(G)$
and $N[u]$ denotes the closed neighbourhood of $u$ in $G$, are all nonempty.
A set $S\subseteq V(G)$ is called a {\em locating code} in $G$, if the sets
$S\cap N[u]$ where $u\in V(G)\setminus S$ are all nonempty and distinct.
A set $S\subseteq V(G)$ is called an {\em identifying code} in $G$,
if the sets $S\cap N[u]$ where $u\in V(G)$ are all nonempty and distinct.
We study locating and identifying codes in the circulant networks~$\cg{n}{1,3}$.
For an integer $n\ge7$, the graph $\cg{n}{1,3}$ has vertex set $\Z_n$
and edges $x y$ where $x,y\in\Z_n$  and $|x-y|\in\{1,3\}$.
We prove that a smallest locating code in $\cg{n}{1,3}$ has size $\lceil n/3\rceil+c$,
where $c\in\{0,1\}$, and a smallest identifying code in $\cg{n}{1,3}$ has size
$\lceil4n/11\rceil+c'$, where $c'\in\{0,1\}$.
\end{abstract}
\begin{keyword}
Domination \sep locating code \sep locating-dominating set \sep identifying code
\sep circulant network

\end{keyword}

\end{frontmatter}

\section{Introduction}
\label{sec:intro}

All graphs considered in this paper are simple,without multiple
edges or loops. Given a graph $G=(V,E)$, for any vertex $u\in V$,
{we denote the {\em neighbourhood} of $u$ in~$G$ by
$N_G(u)=\set{x\in V\st ux\in E}$. By the {\em closed neighbourhood}
of $u\in V$, we mean the set $N_G[u]=N_G(u)\cup\{u\}$.
When the graph $G$ is clear from the context, we omit the subscripts
in this notation.}
Given a subset $S\subseteq V$, the {\em shadow} of a
vertex $u\in V$ on $S$ is defined to be the set $S_u=N[u]\cap S$.
The set $S$ is a {\em dominating set} of $G$ if every $u\in V$ has
a nonempty shadow on~$S$. The set $S$ is said to be an {\em
identifying code}, if it is dominating, and distinct vertices
$u,v\in V$ have distinct shadows on~$S$. The smallest size of an
identifying code in a graph $G$ (if one exists) is called the {\em
identifying number} of $G$ and is denoted by~$\id(G)$. The set $S$
is said to be a {\em locating-dominating set} or a {\em locating
code}, if it is dominating, and distinct vertices $u,v\in
V\setminus S$ have distinct shadows on~$S$. The smallest size of a
locating code in a graph $G$ is called the {\em locating number}
of $G$ and is denoted by~$\loc(G)$. Locating codes were first
introduced in~\cite{slater88}, motivated by nuclear power plant
safety. Vertices of a locating-dominating set $S$ correspond to
safeguards that are able to locate an intruder corresponding to a
vertex in $V-S$. Identifying codes were first introduced in more
general form in~\cite{kcl98}. Karpovsky~{\it et al.} study $r$--identifying
codes in specific topologies of interest in distributed computing
for diagnosis of faulty units in multi-processor networks. In the
definition of $r$--identifying codes and $r$--locating-dominating sets
the neighbourhood $N[u]$ is replaced by the set
$N_r[u]=\{x \in V\st d(u,x) \le r\}$ for a constant $r \ge 1$,
where $d(u,x)$ is the graph distance between vertices $u$ and $x$.
The $r$--identifying and
$r$--locating codes correspond to the identifying and locating codes
in the {$r$th power $G^r$ of $G$}. Locating and identifying codes
have received a great deal of attention from
researchers~\cite{rs84,carson95,slater95,fh98,chlz99,bhl01,slater02}.
In particular, locating and identifying codes in special classes
of networks have been studied. Examples of such articles include
locating codes in trees~\cite{howard04,hhh06}, locating codes in
infinite grids~\cite{hl06}, locating codes in series-parallel
networks~\cite{css86}, locating codes in the infinite triangular
grid~\cite{honkala06}, identifying codes in the infinite hexagonal
grid~\cite{cranstonyu09}, identifying codes in
cages~\cite{laihonen08}, identifying codes in binary Hamming
spaces~\cite{ejlr10}, identifying and locating codes in geometric
networks~\cite{mullersereni09}.

Given positive integers $n$ and $d_1,\ldots,d_k<n/2$,
we define the {\em circulant graph} $\cg{n}{d_1,\ldots,d_k}$
to have vertex set $\Z_n=\big\{0,1,\ldots,n-1\big\}$, in which two
vertices $x,y$ are adjacent if and only if $|x-y|\in\set{d_1,\ldots,d_k}$.
For positive integers $d_1,\ldots,d_k$, the infinite circulant graph
$\cg{\infty}{d_1,\ldots,d_k}$ is defined on the vertex set $\Z$
with edges $x y$ such that $|x-y|\in\set{d_1,\ldots,d_k}$.
The {\em density} of $S\subseteq\Z$ in $\Z$ is defined by
\[\rho(S)=\limsup_{N}\frac{\big|S\cap[-N,N]\big|}{2N+1}.\]
Identifying and locating codes of the circulant graphs
$\cg{n}{1,2,\ldots,r}$ are studied
in~\cite{ber04,gra06,xu08,rob08,clm09,exoo11} as $r$--locating and
$r$--identifying codes of cycles.
{The values of $\loc(\cg{n}{1,2})$ are established in~\cite{clm09}:
for $n \ge 6$,}
$$\lceil n/3 \rceil \le \loc(\cg{n}{1,2})\le \lceil n/3 \rceil \ + 1.$$
{The values of $\id(\cg{n}{1,2})$ are established in~\cite{rob08}:
for $n\ge 8$,}
$$\lceil n/2 \rceil \le \id(\cg{n}{1,2})\le \lceil n/2 \rceil +2.$$
 Motivated by these results, we study locating and
identifying codes of the circulant graphs $\cg{n}{1,3}$. We prove
{for $n\ge9$,
$$\lceil n/3 \rceil\le\loc\big(\cg{n}{1,3}\big)\le\lceil{n}/{3}\rceil+1,$$
and
$$\lceil 4n/11 \rceil\le\id\big(\cg{n}{1,3}\big)\le\lceil 4n/11 \rceil+1.$$
We also prove that the least density of a locating (resp. identifying) code
in $\cg{\infty}{1,3}$ is $1/3$ (resp. $4/11$).
}

\section{General lower bounds}

Recall that for a graph $G=(V,E)$ and a dominating set $S\subset V$, by the
shadow of a a vertex $u\in V$ on $S$ we mean the set $S_u=S\cap N[u]$.
The {\em profile} of $u\in V$ to be the $\d_G(u)+1$--tuple $\pi(u)$
with entries $|S_x|$ where $x\in N[u]$, in ascending order.
The {\em share} of a vertex $u\in S$ in $S$ is defined by
\[\sh(u;S)=\sum_{x\in N[u]}\frac{1}{|S_x|}.\]
When the set $S$ is clear from the context, we refer to $\sh(u;S)$
simply as the share of~$u$ and we denote it by $\sh(u)$. The
following lemma, proved by a simple double-counting argument, is a
powerful tool in obtaining lower bounds on (various flavors of)
domination numbers.

\begin{lem}{\rm\cite{slater02}}
Let $G$ be a graph of order $n$ and let $S$ be a dominating set of~$G$. Then
$\displaystyle\sum_{u\in S}\sh(u)=n$.
\label{lem:slater}
\end{lem}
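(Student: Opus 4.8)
The plan is to prove this identity by a direct double-counting argument: I would expand $\sum_{u\in S}\sh(u)$ into a double sum using the definition of share, and then interchange the order of summation. Substituting $\sh(u)=\sum_{x\in N[u]}\frac{1}{|S_x|}$, the left-hand side becomes
$$\sum_{u\in S}\sh(u)=\sum_{u\in S}\sum_{x\in N[u]}\frac{1}{|S_x|},$$
which I read as a sum over all ordered pairs $(u,x)$ with $u\in S$ and $x\in N[u]$, each pair contributing the weight $1/|S_x|$.

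The key observation I would exploit is the symmetry of closed neighbourhoods in a simple graph: $x\in N[u]$ if and only if $u\in N[x]$. This lets me regroup the double sum according to the second index $x$ rather than the first. For a fixed $x\in V$, the vertices $u\in S$ that contribute the term $1/|S_x|$ are exactly those with $u\in N[x]$, i.e. the elements of $S\cap N[x]=S_x$; there are precisely $|S_x|$ of them. Hence the total contribution attached to each $x$ is $|S_x|\cdot\frac{1}{|S_x|}=1$.

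Before collapsing the sum I would flag the single hypothesis that makes the manipulation legitimate: since $S$ is a dominating set, every vertex $x\in V$ has a nonempty shadow, so $|S_x|\ge1$ and all the reciprocals $1/|S_x|$ are well defined, with no division by zero. Granting this, summing the contribution $1$ over all $x\in V$ yields
$$\sum_{u\in S}\sh(u)=\sum_{x\in V}\frac{|S_x|}{|S_x|}=\sum_{x\in V}1=|V|=n,$$
which is the desired identity.

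This proof is short, and I do not expect a genuine obstacle; the only point requiring care is the bookkeeping in the interchange of summation, namely verifying that each pair $(u,x)$ with $u\in S$ and $x\in N[u]$ is counted exactly once under both indexing schemes, which follows immediately from the neighbourhood symmetry. The real payoff comes later, when the lemma is applied: bounding the individual shares $\sh(u)$ from above converts the exact identity $n=\sum_{u\in S}\sh(u)$ into a lower bound on $|S|$, and getting sharp share bounds for $\cg{n}{1,3}$ is where the substantive work will lie.
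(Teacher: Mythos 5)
Your proof is correct and matches the paper's approach: the paper cites this lemma from Slater and explicitly describes it as following from ``a simple double-counting argument,'' which is precisely the interchange-of-summation argument you carry out. You also correctly identify the one hypothesis that matters --- domination guarantees $|S_x|\ge1$, so every reciprocal is defined --- so there is nothing to add.
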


The above lemma yields the following lower bounds on the size of
locating and identifying codes in a general graph.

\begin{prop}{\rm\cite{slater88}}
For a graph $G$ of order $n$ and maximum degree $\Delta$ we have
 $\loc(G)\ge\displaystyle\frac{2n}{\Delta+3}$. \label{prop:genboundl}
\end{prop}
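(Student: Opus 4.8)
The plan is to combine Lemma~\ref{lem:slater} with a uniform upper bound on the share of each code vertex. Since a locating code $S$ is by definition dominating, Lemma~\ref{lem:slater} gives $\sum_{u\in S}\sh(u)=n$. If I can show that $\sh(u)\le(\Delta+3)/2$ for every $u\in S$, then $n\le|S|\,(\Delta+3)/2$, which rearranges to the claimed bound $\loc(G)\ge 2n/(\Delta+3)$.

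The heart of the argument is therefore the pointwise share estimate, and this is where the locating property enters. Fix $u\in S$ and partition its closed neighbourhood $N[u]$ into three parts: the vertex $u$ itself, the neighbours lying in $S$, and the neighbours lying in $V\setminus S$. For any neighbour $x\in N(u)\cap S$ we have $u,x\in S\cap N[x]$, so $|S_x|\ge2$ and its contribution $1/|S_x|$ to the share is at most $1/2$; the term for $u$ itself contributes at most $1$ since $u\in S_u$. The key observation concerns the neighbours outside the code: if $x\in N(u)\setminus S$ has $|S_x|=1$, then necessarily $S_x=\{u\}$, because $u\in S\cap N[x]$. Since distinct vertices of $V\setminus S$ must have distinct shadows, at most one such $x$ can have shadow exactly $\{u\}$, and every other neighbour outside $S$ satisfies $|S_x|\ge2$.

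Writing $d=\deg(u)$ and splitting $N(u)$ into $a$ neighbours in $S$ and $b=d-a$ neighbours outside $S$, the estimates above give, in the case $b\ge1$,
\[
\sh(u)\le 1+\frac{a}{2}+\Bigl(1+\frac{b-1}{2}\Bigr)=1+\frac{a+b+1}{2}=\frac{d+3}{2}\le\frac{\Delta+3}{2},
\]
while the case $b=0$ yields the even smaller value $1+d/2$. Summing this bound over all $u\in S$ and invoking Lemma~\ref{lem:slater} completes the proof.

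I expect the main (and essentially only) obstacle to be the pointwise share bound, specifically the recognition that the locating condition forces at most one out-of-code neighbour of $u$ to have shadow $\{u\}$; once this fact is isolated, the remaining arithmetic is routine bookkeeping. Notably, the argument uses the locating property only through this single shadow-collision restriction and otherwise relies on nothing beyond what Lemma~\ref{lem:slater} already encodes.
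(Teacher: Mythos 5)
Your proof is correct, and it follows exactly the route the paper indicates for this proposition (which it cites from Slater and presents as a consequence of Lemma~\ref{lem:slater}): bound the share of each code vertex by $(\Delta+3)/2$, using the locating property to ensure that at most one neighbour of $u$ outside $S$ can have shadow equal to $\{u\}$, and then apply $\sum_{u\in S}\sh(u)=n$. Your case analysis and arithmetic check out, so nothing further is needed.
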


\begin{prop}{\rm\cite{kcl98}}
For a graph $G$ of order $n$ and maximum degree $\Delta$ we have
$\id(G)\ge \displaystyle\frac{2n}{\Delta+2}$.
\label{prop:genboundi}
\end{prop}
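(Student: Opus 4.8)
The plan is to derive the bound from the share identity of Lemma~\ref{lem:slater} together with a uniform upper estimate on the share of each codeword. Let $S$ be an identifying code of $G$ and fix a vertex $u\in S$. First I would note that the adjacency relation is symmetric, so every $x\in N[u]$ also satisfies $u\in N[x]$; since $u\in S$ this forces $u\in S\cap N[x]=S_x$. Thus each shadow $S_x$ with $x\in N[u]$ is nonempty and contains $u$. The decisive observation uses the identifying property: if some $x\in N[u]$ has $|S_x|=1$, then necessarily $S_x=\{u\}$, and because distinct vertices of $G$ have distinct shadows, at most one vertex of $G$—hence at most one $x\in N[u]$—can have shadow equal to $\{u\}$.

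Consequently, in the sum $\sh(u)=\sum_{x\in N[u]}1/|S_x|$ at most one term equals $1$, while every remaining term satisfies $1/|S_x|\le\tfrac12$. Since $|N[u]|\le\Delta+1$, this yields
\[
\sh(u)\le 1+\big(|N[u]|-1\big)\cdot\tfrac12\le 1+\tfrac{\Delta}{2}=\frac{\Delta+2}{2}.
\]
Summing over all $u\in S$ and invoking Lemma~\ref{lem:slater},
\[
n=\sum_{u\in S}\sh(u)\le|S|\cdot\frac{\Delta+2}{2},
\]
so that $|S|\ge 2n/(\Delta+2)$. Taking $S$ to be a smallest identifying code gives $\id(G)\ge 2n/(\Delta+2)$, as claimed.

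The only delicate point is the share estimate, and within it the single counting observation that caps the number of singleton shadows: the identifying condition forces every vertex whose shadow is the one-element set $\{u\}$ to coincide, so the closed neighbourhood of $u$ contributes at most one term equal to $1$. The remainder is the elementary inequality $1/|S_x|\le\tfrac12$ whenever $|S_x|\ge2$, together with the degree bound $|N[u]|-1\le\Delta$, so I anticipate no genuine obstacle beyond stating this bookkeeping carefully. I would also remark that the weaker locating bound of Proposition~\ref{prop:genboundl} arises from the same scheme, the difference being that for a locating code only shadows of vertices in $V\setminus S$ are required to be distinct; this permits one additional singleton term (arising from $u$ itself), which replaces the denominator $\Delta+2$ by $\Delta+3$.
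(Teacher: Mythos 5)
Your proof is correct and is exactly the derivation the paper indicates: it states that Lemma~\ref{lem:slater} ``yields'' this bound, and your argument---capping the number of singleton shadows $S_x=\{u\}$ over $x\in N[u]$ at one via the identifying property, so that $\sh(u)\le 1+\Delta/2$, then summing shares---is the standard way that implication goes. Your closing remark correctly accounts for the $\Delta+3$ denominator in Proposition~\ref{prop:genboundl} as well, since a locating code permits one extra singleton shadow at $u$ itself.
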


\section{Locating number of $\cg{n}{1,3}$}

From Proposition~\ref{prop:genboundl} it follows that if $G$ is a
$4$--regular graph of order $n$ then  $\loc(G)\ge 2n/7 $. In this
section we obtain a better lower bound for the locating number of
the circulant graphs $\cg{n}{1,3}$, and we show that this bound is
asymptotically tight. Let $n\ge13$, and let $S$ be a locating code
in the graph $\cg{n}{1,3}$. A vertex $u\in S$ is said to be {\em
heavy} if $\sh(u)>3$.

\begin{lem}
Let $u\in S$ be a heavy vertex. Then $\pi(u)$ is either
$(1,1,2,2,3)$ or $(1,1,2,3,4)$. Moreover, we may assign to each
heavy vertex $u\in S$, a vertex $u'\in S$, called the {\em
mate} of $u$, such that $\sh(u)+\sh(u')\le6$.
{Moreover, distinct heavy vertices have distinct mates.}
\label{lem:locproof}
\end{lem}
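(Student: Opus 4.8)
The plan is to analyze the structure of a heavy vertex $u \in S$ in the circulant graph $\cg{n}{1,3}$, where $u$ has exactly four neighbours $u-3, u-1, u+1, u+3$ (so $\deg u = 4$ and $N[u]$ has five vertices). Since $\sh(u) = \sum_{x \in N[u]} 1/|S_x|$ and each $|S_x| \ge 1$, the maximum possible share is $5$ (when all shadows are singletons). The condition $\sh(u) > 3$ is a strong constraint. First I would enumerate which profiles $\pi(u)$ — the sorted $5$-tuple of shadow sizes $|S_x|$ for $x \in N[u]$ — are consistent with $\sh(u) > 3$, and separately with $S$ being a \emph{locating} code. The locating condition forces the shadows $S_x$ of vertices $x \in V \setminus S$ to be distinct and nonempty; in particular, among the neighbours of $u$ that lie outside $S$, no two can have equal singleton shadows, which rules out several high-share profiles. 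I expect a short case analysis on the multiset of shadow sizes to pin down that the only surviving profiles are $(1,1,2,2,3)$ and $(1,1,2,3,4)$.

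Next I would establish the mate assignment. For a heavy vertex $u$, both admissible profiles contain at least two entries equal to $1$, meaning at least two vertices in $N[u]$ have singleton shadow, namely $\{$some element of $S\}$. The key idea is to locate, in a controlled window around $u$, a specific vertex $u' \in S$ whose share is provably small enough that $\sh(u) + \sh(u') \le 6$. Since a heavy vertex has $\sh(u) \le 5$ (and in the profile cases, $\sh(u)$ takes concrete values strictly between $3$ and $5$), it suffices to find a nearby code vertex $u'$ with $\sh(u') \le 6 - \sh(u)$. I would compute $\sh(u)$ exactly for each profile: the profile $(1,1,2,2,3)$ gives $\sh(u) = 1+1+\tfrac12+\tfrac12+\tfrac13 = \tfrac{10}{3}$, and $(1,1,2,3,4)$ gives $\sh(u) = 1+1+\tfrac12+\tfrac13+\tfrac14 = \tfrac{37}{12}$. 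So I need a mate $u'$ with $\sh(u') \le \tfrac{8}{3}$ in the first case and $\sh(u') \le \tfrac{35}{12}$ in the second.

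The natural candidate for the mate is one of the singleton-shadow neighbours: if $x \in N[u]$ has $|S_x| = 1$ then the unique element of $S_x$ is a code vertex $u'$ sitting close to $u$. The crucial observation is that a vertex $u'$ whose shadow is forced to be a singleton for some neighbour of $u$ cannot itself be heavy without creating a conflict with the distinctness of shadows required by the locating property; more precisely, I would show that such a $u'$ has at least two neighbours with shadow of size $\ge 2$, bounding $\sh(u') \le 3$ and in fact below the required threshold after examining the local overlap of closed neighbourhoods $N[u] \cap N[u']$. Because $|x - x'| \in \{1,3\}$ for adjacency, the closed neighbourhoods of $u$ and a nearby $u'$ share two or three vertices, and these shared vertices contribute the same $1/|S_x|$ terms to both shares; this shared mass is exactly what forces $\sh(u) + \sh(u') \le 6$.

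The main obstacle, and the step requiring the most care, will be the final clause that \emph{distinct heavy vertices have distinct mates}. A single code vertex $u'$ could a priori be the designated mate of two different heavy vertices $u_1, u_2$, which would break the injectivity needed for the summation argument in the eventual lower bound. To handle this I would make the mate assignment canonical — e.g. always choosing the mate on a fixed side (say the unique singleton-shadow neighbour of smallest index, or a vertex at a prescribed offset like $u-3$ or $u+3$) — and then argue that if $u'$ were the mate of two distinct heavy vertices, the local configuration around $u'$ would force too many singleton shadows in a small window, contradicting either the locating distinctness condition or the heaviness of both $u_1$ and $u_2$. Verifying that this canonical choice is always available (i.e. that the prescribed neighbour indeed has singleton shadow) for both admissible profiles, and that it yields injectivity, is where the bulk of the genuine combinatorial work lies; the share inequality itself is then a routine arithmetic check against the two profile values computed above.
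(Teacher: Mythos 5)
Your proposal founders at the mate-existence step, and the flaw is concrete. You propose to take as mate ``the unique element of $S_x$'' for a singleton-shadow neighbour $x$ of $u$. But since $u\in S$ and $u\in N[x]$ for every $x\in N[u]$, any singleton shadow $S_x$ with $x\in N[u]$ must equal $\{u\}$: the vertex your recipe produces is $u$ itself, so it defines no mate at all. Your fallback of ``a vertex at a prescribed offset like $u-3$ or $u+3$'' also fails: once two singleton shadows are present, say $S_0=S_{-1}=\{0\}$ (taking $u=0$), we get $[-4,3]\cap S=\{0\}$, so neither $u\pm1$ nor $u\pm3$ lies in $S$. The existence of a usable mate is not automatic; in the paper it is extracted from forced memberships that the locating and domination conditions cascade to produce: when the second singleton is $S_{-1}$ one is forced to have $4,5,6\in S$ and the mate is $u'=u+4$ with $\sh(u')\le13/6$; when it is $S_{-3}$ one is forced to have $2,4,6\in S$ and the mate is $u'=u+2$ with $\sh(u')\le29/12$. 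These bounds come from the exact shadow sizes the forced configuration pins down, not from the ``shared mass'' of overlapping closed neighbourhoods, which by itself yields no inequality. Note also that both sums equal $11/2$, strictly better than the $6$ demanded, and that your target thresholds ($8/3$ and $35/12$) are only met because of this detailed forcing.

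Two further gaps. First, the profile classification cannot be settled by arithmetic on the multiset of shadow sizes plus the observation that at most two entries equal $1$: profiles such as $(1,1,2,2,2)$, $(1,1,2,2,4)$, and $(1,1,2,3,3)$ all have share greater than $3$ and survive your filter. Eliminating them requires exactly the structural forcing above; for instance, in the $S_{-1}$ case one must argue $6\in S$ (else $S_1=S_3=\{0,4\}$), which forces $|S_3|=3$ and kills $(1,1,2,2,2)$, while in the $S_{-3}$ case the cascade forces $|S_1|=3$ and $|S_3|=4$, leaving only $(1,1,2,3,4)$. Second, injectivity of the mate map is openly deferred in your last paragraph, yet it is not where ``the bulk of the work lies'' once the forced configurations are in hand: all vertices strictly between $u$ and its mate are outside $S$, while the vertex immediately beyond the mate ($5$ in the first configuration, $4$ in the second) is in $S$, so $u'$ cannot simultaneously serve a heavy vertex approaching from the other side; the one residual scenario ($u'=2$ doubling as the mate of the vertex $4$ in the second configuration) is excluded because every neighbour of $4$ there has shadow of size at least $2$, so $4$ is not heavy. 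As written, your proposal gets the share arithmetic right but has no valid construction of the mate, so the main assertion $\sh(u)+\sh(u')\le6$ is never established.
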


\begin{proof}
By symmetry, we may assume $u=0$. Note that if there is at most
one $x\in N[0]$ with $|S_x|=1$, then {$\sh(0)\le 1+4/2=3$}.
Thus there is $x\in N(0)$ such that $S_0=S_x=\{0\}$.
Since $N(0)=\{-3,1,1,3\}$, we may assume without loss of generality that
$x\in\{-1,-3\}$.

{\em Case 1:} $x=-1$. Since $S_0=S_{-1}=\{0\}$, we have $[-4,3]\cap S=\{0\}$.
Thus we must have $-6\in S$ since otherwise, $S_{-3}=\{0\}$,
which contradicts the locating property of~$S$.
Similarly, we must have $4\in S$ since otherwise, $S_{1}=\{0\}$.
We must also have $6\in S$ since otherwise, $S_1=S_3=\{0,4\}$.
We now have $|S_0|=|S_{-1}|=1$, $|S_{-3}|=|S_{1}|=2$, and $|S_3|=3$,
giving $\pi(0)=(1,1,2,2,3)$.
Moreover, we must have $5\in S$ since otherwise, $S_2=\emptyset$.

{\em Case 2:} $x=-3$. Since $S_0=S_{-3}=\{0\}$, we have
$\{-6,-4,-3,-2,-1,1,3\}\cap S=\emptyset$.
Thus we must have $2\in S$ since otherwise, $S_{-1}=\{0\}$.
Since $2$ is a common neighbour of $-1$, $1$, and $3$,
we must have $4\in S$, and since $4$ is a common neighbour of
$1$ and $3$, we must have $6\in S$.
We now have $|S_0|=|S_{-3}|=1$, $|S_{-1}|=2$, $|S_{1}|=3$, and $|S_3|=4$,
giving $\pi(0)=(1,1,2,3,4)$.

In case~1, we assign $4$ as the mate of~$0$.
We have $|S_1|,|S_4|,|S_7|\ge2$ and $|S_3|,|S_5|\ge3$, thus
$\sh(4)\le13/6$. We see that $\sh(0)+\sh(4)\le11/2<6$.
In case~2, we assign $2$ as the mate of~$0$.
We have $|S_2|\ge1$, $|S_{-1}|=2$, $|S_1|,|S_5|\ge3$, and $|S_3|=4$,
thus $\sh(2)\le29/12$. We see that $\sh(0)+\sh(2)\le11/2<6$.

Note that in case~1, the vertices $1,2,3$ between $0$
and its mate $4$ are not in~$S$,
and in case~$2$, the vertex $1$ between $0$ and its mate $2$
is not in~$S$. Since in case~1, $5\in S$ and in case~$2$, $4\in S$,
we see that in either case, $u'$ cannot also be the mate of~$u'+4$.
Also since $5\in S$ in case~1, we see that in this case, $u'$ cannot
also be the mate of~$u'+2$.
It remains to show that in case~2, $u'=2$ is not also the
mate of $4$. This is true since all neighbours of $4$
have a shadow of size at least $2$, hence $4$ is not a heavy vertex.
\end{proof}

\begin{thm}
For every $n\ge13$, $\loc\big(\cg{n}{1,3}\big)\ge n/3$.
\label{thm:locmain}
\end{thm}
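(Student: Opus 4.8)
The plan is to combine Slater's counting identity (Lemma~\ref{lem:slater}) with the structural result of Lemma~\ref{lem:locproof} to show that the total share $\sum_{u\in S}\sh(u)=n$ cannot exceed $3|S|$, which immediately yields $|S|\ge n/3$ and hence the claimed bound. Throughout, $S$ denotes the arbitrary locating code fixed at the start of the section, so $n\ge13$ and Lemma~\ref{lem:locproof} applies.

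First I would record two facts about mates that are established inside the proof of Lemma~\ref{lem:locproof}. In case~1 the mate satisfies $\sh(\cdot)\le13/6$ and in case~2 it satisfies $\sh(\cdot)\le29/12$; in particular every mate has share strictly less than $3$, so no mate is itself a heavy vertex. Combined with the assertion that distinct heavy vertices receive distinct mates, this shows that the assignment $u\mapsto u'$ is an injection from the set $H$ of heavy vertices into $S\setminus H$. Writing $M=\{u'\st u\in H\}$ for its image, we get $|M|=|H|$ and $H\cap M=\emptyset$, so $H$ and $M$ together occupy exactly $2|H|$ vertices of~$S$.

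Next I would split the sum $\sum_{u\in S}\sh(u)$ according to this partition of $S$ into the $|H|$ heavy--mate pairs $\{u,u'\}$ and the remaining $|S|-2|H|$ vertices. Each pair contributes $\sh(u)+\sh(u')\le6$ by Lemma~\ref{lem:locproof}, while each remaining vertex $w$ is non-heavy and hence contributes $\sh(w)\le3$ by the definition of heaviness. Summing and invoking Lemma~\ref{lem:slater} gives
\[
n=\sum_{u\in S}\sh(u)\le 6|H|+3\bigl(|S|-2|H|\bigr)=3|S|,
\]
so $|S|\ge n/3$. As $S$ was an arbitrary locating code, this yields $\loc\big(\cg{n}{1,3}\big)\ge n/3$.

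The only real content is in verifying that the regrouping of the sum is legitimate, i.e.\ that no vertex of $S$ is counted twice: a vertex must not serve both as a heavy vertex and as a mate, and no vertex may be the mate of two distinct heavy vertices. These are precisely the two properties harvested above—injectivity of $u\mapsto u'$ and the fact that mates are never heavy—so once they are cited the theorem follows from the single displayed inequality. I would therefore expect no obstacle beyond stating those two properties carefully.
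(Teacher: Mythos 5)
Your proposal is correct and follows essentially the same route as the paper's proof: partition the share sum using the heavy--mate pairs from Lemma~\ref{lem:locproof}, bound each pair by $6$ and every remaining vertex by $3$, and conclude via Lemma~\ref{lem:slater}. The only difference is that you explicitly verify that mates are never heavy (via the share bounds $13/6$ and $29/12$ from the lemma's proof), a disjointness point the paper leaves implicit, which makes your regrouping argument slightly more careful than the published one.
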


\begin{proof}
Let $S$ be a locating code in $\cg{n}{1,3}$.
Lemma~\ref{lem:locproof} gives a unique mate $u'$ for every heavy vertex~$u$,
such that $\sh(u)+\sh(u')\le6$. On the other hand, for every other vertex
$v\in S$ we have $\sh(v)\le3$. Thus the total share of vertices of $S$ is
at most $3|S|$.
The result now follows from Lemma~\ref{lem:slater}.
\end{proof}

Note that the proof of Lemma~\ref{lem:locproof} works also for the
graph $\cg{\infty}{1,3}$. On the other hand, the neighbours of each vertex
$u\in\Z$ are within short numeric distances of $u$ (at most $3$).
These allow us to prove a lower bound of $1/3$ on the density of
any locating set in $\cg{\infty}{1,3}$.

\begin{thm}
Every locating code in $\cg{\infty}{1,3}$ has density at least ${1}/{3}$.
\label{thm:infloc}
\end{thm}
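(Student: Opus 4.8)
The plan is to transfer the finite double-counting argument of Theorem~\ref{thm:locmain} to the infinite setting by working on long intervals and controlling boundary effects. The key observation is that all the local structure used in Lemma~\ref{lem:locproof}—the profiles $(1,1,2,2,3)$ and $(1,1,2,3,4)$, the existence and location of the mate, and the fact that a heavy vertex $u$ and its mate $u'$ satisfy $|u-u'|\in\{2,4\}$ with the mate lying to one side—depends only on vertices within numeric distance at most $6$ of $u$. Since $\cg{\infty}{1,3}$ is locally identical to $\cg{n}{1,3}$ for large $n$, the lemma holds verbatim for every heavy vertex of a locating code $S\subseteq\Z$, and distinct heavy vertices still have distinct mates.

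First I would fix a large integer $N$ and consider the window $I=[-N,N]\cap\Z$. For a heavy vertex $u$, its share $\sh(u)=\sum_{x\in N[u]}1/|S_x|$ is a sum over the closed neighbourhood, all of whose members lie within distance $3$ of $u$; similarly the mate $u'$ and the vertices contributing to $\sh(u')$ lie within a bounded distance. Define the total share inside the window as $\Sigma_N=\sum_{u\in S\cap I}\sh(u)$. The analogue of Lemma~\ref{lem:slater} for the infinite graph is that $\sum_{u\in S\cap I}\sh(u)$ counts each vertex $v\in I$ with weight $\sum_{u\in N[v]\cap S}1/|S_v|=1$, except that vertices near the two endpoints of $I$ may have part of their neighbourhood outside $S\cap I$. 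Thus $\big|\Sigma_N-|I|\big|$ is bounded by a constant $B$ independent of $N$, since only $O(1)$ vertices within distance $3$ of each endpoint are affected.

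Next I would bound $\Sigma_N$ from above. Pairing each heavy vertex in $S\cap I$ with its mate as in Theorem~\ref{thm:locmain}, the mate pairing gives $\sh(u)+\sh(u')\le 6$, so each such pair contributes at most $6$, i.e.\ an average of at most $3$ per vertex; every non-heavy vertex of $S$ contributes at most $3$. The only subtlety is that a heavy vertex near the boundary might have its mate just outside $I$ (or vice versa), but since mates lie within distance $4$, this again affects only $O(1)$ vertices and adds a bounded correction. Hence $\Sigma_N\le 3\,|S\cap I|+B'$ for a constant $B'$. Combining with the lower bound on $\Sigma_N$ gives
\[
|I|-B\le\Sigma_N\le 3\,|S\cap I|+B',
\]
so $|S\cap[-N,N]|\ge\tfrac13\big(|I|-B-B'\big)=\tfrac13(2N+1)-O(1)$.

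Finally I would divide by $2N+1$ and take the limit superior: the $O(1)$ correction vanishes, yielding $\rho(S)\ge 1/3$. \textbf{The main obstacle} I expect is the careful bookkeeping of the boundary terms—verifying that every way in which a vertex, a heavy classification, a mate, or a share computation can straddle the endpoints of $I$ contributes only a constant independent of $N$. Once the locality radius (here $6$) is fixed, this is routine but must be stated precisely so that the error terms are genuinely $O(1)$ and not growing with $N$; the rest is the same double count as in the finite case.
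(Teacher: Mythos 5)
Your proposal is correct and takes essentially the same approach as the paper's own proof: Lemma~\ref{lem:locproof} is observed to hold verbatim in $\cg{\infty}{1,3}$, and the double count of shares over the window $[-N,N]$ with $O(1)$ boundary corrections (the paper makes these explicit: at most two heavy vertices, one per end, can lose their mates, each with share at most $3+1/3$, costing an additive $2/3$) yields $\big|S\cap[-N,N]\big|\ge\frac{1}{3}(2N+1)-O(1)$ and hence $\rho(S)\ge 1/3$. If anything, your explicit bookkeeping of the boundary terms in the lower bound for $\Sigma_N$ is slightly more careful than the paper's displayed inequality, which silently discards the few vertices near the endpoints dominated only from outside the window.
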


\begin{proof}
Let $S$ be a locating set in $\cg{\infty}{1,3}$.
Note that the mate of each heavy vertex found in Lemma~\ref{lem:locproof}
is within numeric distance at most $4$ of that vertex.
Thus for any positive integer $N$, the set $S'=S\cap[-N,N]$ contains at most
two heavy vertices (one at each end) whose mate is not present in~$S'$.
Since by Lemma~\ref{lem:locproof}, the share in $S$ of a heavy vertex
is at most $3+1/3$, we obtain
\[\sum_{u\in S'}\sh(u)\le3|S'|+2/3.\]
On the other hand,
\[\sum_{u\in S'}\sh(u)=\sum_{u\in S'}\sum_{x\in N[u]}\frac{1}{|S_x|}
  \ge\sum_{x\in[-N,N]}\sum_{u\in S_x}\frac{1}{|S_x|}=2N+1.\]
The inequality appears since not all neighbours of every $u\in S'$
are necessarily in the range $[-N,N]$. These inequalities give
$2N+1\le 3|S'|+2/3$, or
\[\frac{\big|S\cap[-N,N]\big|}{2N+1}\ge\frac{1}{3}-\frac{2}{9(2N+1)}.\]
We conclude that $\rho(S)\ge1/3$.
\end{proof}

In the remainder of this section, we provide constructions of
locating codes in circulant graphs $\cg{n}{1,3}$. From
Theorem~\ref{thm:locmain}, we know that such codes have size at
least $\ceil{n/3}$. We give general constructions for $n\ge13$.
These codes have size $\ceil{n/3}$, unless when $n\equiv2\mod3$,
where the constructed code has size $\ceil{n/3}+1$. We do not know
whether this is best possible, but using a brute-force computer
search, we verified that for $14\le n\le38$, a locating code of
size $\ceil{n/3}$ does not exist in this case. For $n<13$, we
verified using this program that $\loc\big(\cg{7}{1,3}\big)=3$,
$\loc\big(\cg{8}{1,3}\big)=6$,
$\loc=\big(\cg{9}{1,3}\big)=\loc\big(\cg{10}{1,3}\big)=\loc\big(\cg{11}{1,3}\big)=4$,
and $\loc\big(\cg{12}{1,3}\big)=5$.

For a positive integer $t$, let
\[A_t=\big\{6i+j\st 0\le i\le t-1\text{ and }j\in\{0,1\}\big\}.\]
It is easy to see that for $t\ge3$, the set $A_t$ is a locating code in $\cg{6t}{1,3}$.
The sets $A_t$ can indeed be used in constructions of locating codes for
the graphs $\cg{n}{1,3}$ when $n$ is not necessarily a multiple of~$6$.
Such constructions are presented in Table~\ref{tbl:loc}.
\begin{table}[ht]
\centering
\begin{tabular}{lcl}
\hline
$n$ &&  A locating code for $\cg{n}{1,3}$\\
\hline
$6t+1$  && $A_t\cup\{6t-2\}$\\
$6t+2$  && $A_{t+1}$\\
$6t+3$  && $A_{t+1}$\\
$6t+4$  && $A_{t+1}$\\
$6t+5$  && $A_{t+1}\cup\{6t-2\}$\\
$6t+6$  && $A_{t+1}$\\
\hline
\end{tabular}
\caption{Constructions of locating codes for the circulant graphs $\cg{n}{1,3}$.
         Here $t\ge 2$ is an integer.}
\label{tbl:loc}
\end{table}

We omit the proofs here. The proofs are straight-forward, and all
take advantage of the ``local'' structure of the graphs
$\cg{n}{1,3}$, namely the fact that each neighbourhood is
contained in an interval of length~$6$. We present an example of
these codes in Figure~\ref{fig:loceg}. These results are
summarized in the next theorem.

\begin{figure}[ht]
\centering
\includegraphics{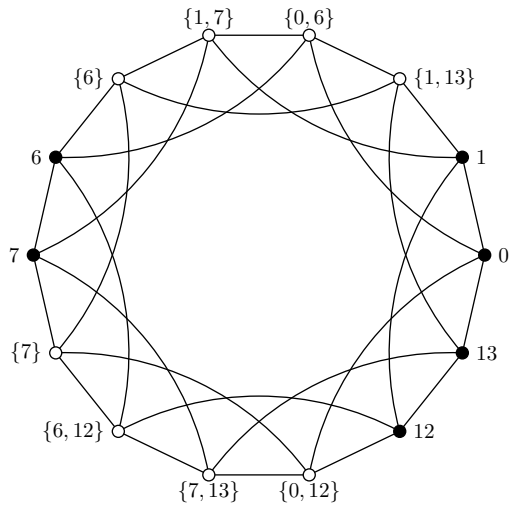}
\caption{A minimum locating code of $\cg{14}{1,3}$.
        Vertices in the code are in black.
        The number next to a vertex is its label.
        The set next to each vertex is its shadow on this code.}
\label{fig:loceg}
\end{figure}

\begin{thm}
Let {$n\ge9$}. Then $\loc\big(\cg{n}{1,3}\big)=\ceil{n/3}$ if $n\not\equiv2\mod3$,
and ${\ceil{n/3}\le}\loc\big(\cg{n}{1,3}\big)\le\ceil{n/3}+1$ if $n\equiv2\mod3$.
\label{thm:locconst}
\end{thm}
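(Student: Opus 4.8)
The plan is to establish the final theorem by combining the lower bound already in hand with explicit constructions. The lower bound $\loc(\cg{n}{1,3})\ge\ceil{n/3}$ follows immediately from Theorem~\ref{thm:locmain}, which gives $\loc(\cg{n}{1,3})\ge n/3$; since the locating number is an integer, this rounds up to $\ceil{n/3}$. So the entire substance of the proof lies on the upper bound side, where I must exhibit locating codes of the claimed sizes. The organizing idea is the set $A_t=\big\{6i+j\st 0\le i\le t-1\text{ and }j\in\{0,1\}\big\}$, a periodic ``pair every six'' pattern that is a locating code in $\cg{6t}{1,3}$ of size $2t=6t/3=\ceil{6t/3}$. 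The key structural fact I would lean on throughout is the one the authors emphasize: every closed neighbourhood $N[u]$ in $\cg{n}{1,3}$ sits inside an interval of numeric length~$6$, so the locating property can be checked purely locally by examining short windows of consecutive residues.

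First I would handle the residue classes of $n$ modulo~$6$ one at a time, using the constructions tabulated in Table~\ref{tbl:loc}, and verify in each case that the listed set is genuinely a locating code and has the size asserted by the theorem. For $n=6t$ the bare set $A_t$ works and has size $2t=\ceil{n/3}$; for the cases $6t+2,6t+3,6t+4,6t+6$ one uses $A_{t+1}$ and checks that its size matches $\ceil{n/3}$ in the cases $n\not\equiv2\mod3$ and matches $\ceil{n/3}+1$ precisely when $n\equiv2\mod3$; the two remaining cases $6t+1$ and $6t+5$ add a single ``patch'' vertex $6t-2$ to repair the seam where the periodic pattern wraps around. The verification that each set dominates and locates is the same local argument in every case: I would slide a window across $\Z_n$, list the shadows $S_u=S\cap N[u]$ of the (few) vertices not in $S$, and confirm that these shadows are pairwise distinct and nonempty. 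Because the pattern is periodic away from the seam, only the constant number of vertices near the wrap-around point require genuine case-checking; the rest is forced by periodicity.

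The main obstacle I anticipate is the bookkeeping at the seam, not in the periodic interior. When $n$ is not a multiple of~$6$ the clean period-$6$ block structure of $A_t$ is broken where residue $n-1$ meets residue $0$, and it is exactly there that the naive extension can fail to locate two vertices with equal shadows (the proof of Lemma~\ref{lem:locproof} shows how easily a coincidence $S_u=S_v$ arises). The role of the correction term $6t-2$ in the $6t+1$ and $6t+5$ rows is to break such a coincidence at the boundary, and the delicate point is to confirm that inserting this extra vertex resolves every near-seam collision without creating a new one elsewhere. I would therefore spend most of the effort writing out, for each residue class, the explicit shadows of the vertices in a fixed-width window straddling the seam (say the residues in $[-6,6]\bmod n$) and checking distinctness there by hand; away from this window the interior periodicity of $A_t$ guarantees the locating property with no further work.

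A secondary subtlety worth flagging is the size count when $n\equiv2\mod3$. Here the three sub-cases $n=6t+2$ and $n=6t+5$ (which are the residues $\equiv2\pmod 3$ among $6t+j$) must yield size $\ceil{n/3}+1$ rather than $\ceil{n/3}$, and I would verify that $|A_{t+1}|=2t+2$ indeed overshoots $\ceil{(6t+2)/3}=2t+1$ by exactly one, and that $|A_{t+1}|+1=2t+3$ overshoots $\ceil{(6t+5)/3}=2t+2$ by exactly one; together with the lower bound $\ceil{n/3}$ this pins the locating number into the stated interval $[\ceil{n/3},\ceil{n/3}+1]$. The hypothesis $n\ge9$ ensures $t\ge2$ in Table~\ref{tbl:loc}, so all the constructions are well-defined and the small exceptional orders (handled by computer in the text) do not interfere.
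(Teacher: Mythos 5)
Your overall route---the lower bound from Theorem~\ref{thm:locmain} (rounded up to $\ceil{n/3}$) plus the constructions of Table~\ref{tbl:loc} verified by a local sliding-window argument near the seam---is exactly the paper's proof, whose verification details are likewise omitted as routine. However, your size bookkeeping contains a genuine gap at the residue $n=6t+3$. You assert that for the cases $6t+2,6t+3,6t+4,6t+6$ the set $A_{t+1}$ ``matches $\ceil{n/3}$'' whenever $n\not\equiv2\bmod3$; this fails for $n=6t+3$: there $|A_{t+1}|=2(t+1)=2t+2$ while $\ceil{(6t+3)/3}=2t+1$. So the construction you cite only yields $\loc\big(\cg{n}{1,3}\big)\le\ceil{n/3}+1$ for $n\equiv3\bmod6$ and does not establish the equality the theorem claims for $n\equiv0\bmod3$. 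Closing this class would require a code of the odd size $2t+1$, hence not a disjoint union of pairs $\{6i,6i+1\}$, and nothing in your plan produces one. (The discrepancy is inherited from the paper's own table caption, but your ``secondary subtlety'' paragraph performed the size check only for the $n\equiv2\bmod3$ rows and took the remaining rows on faith---precisely where the check fails. Worse, the gap may not be a mere typo: for $n=15$, Lemma~\ref{lem:slater} forces a size-$5$ code to have every vertex of share exactly $3$, since by Lemma~\ref{lem:locproof} any heavy vertex together with its mate contributes at most $11/2<6$; this pins the code's induced structure to a very short list of configurations, and a case check suggests none of them locates, so the equality at $n\equiv3\bmod6$ needs a real argument, not a citation of $A_{t+1}$.)

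Your final sentence---``the hypothesis $n\ge9$ ensures $t\ge2$ in Table~\ref{tbl:loc}''---is also false: the orders $n=9,10,11,12$ correspond to $t=1$, and the table is stated only for $t\ge2$, i.e.\ $n\ge13$; moreover Theorem~\ref{thm:locmain} itself is proved only for $n\ge13$. These four orders cannot be absorbed silently: by the paper's own computer results, $\loc\big(\cg{9}{1,3}\big)=4>3=\ceil{9/3}$ and $\loc\big(\cg{12}{1,3}\big)=5>4=\ceil{12/3}$, so the statement as literally printed already fails at $n=9$ and $n=12$ and no proof strategy can rescue it there (the intended hypothesis is evidently $n\ge13$, with $n=10,11$ verified separately). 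A correct write-up must restrict the range accordingly, or treat $9\le n\le12$ as explicit exceptions, and must repair or replace the $6t+3$ row before the claimed equality for $n\equiv0\bmod3$ can be asserted.
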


For a locating code in $\cg{\infty}{1,3}$ with density $1/3$, one may take
the code \[A_\infty=\big\{6i+j\st i\in\Z\text{ and }j\in\{0,1\}\big\}.\]

\section{Identifying number of $\cg{n}{1,3}$}

\def\H{\Gamma}

In this section we obtain a lower bound for the identifying number of
the circulant graphs $\cg{n}{1,3}$, and we show that this bound is
asymptotically tight.
We assume that $n\ge13$ is an integer,
and $S$ is an identifying code in the circulant graph $G=\cg{n}{1,3}$.
A vertex $u\in S$ is said to be a {\em heavy} vertex, if $\sh(u)>11/4$ .
The subgraph of $G$ induced by $S$ is denoted by~$\H$.
The connected component of $\H$ containing a vertex $u\in S$ is denoted by~$\H_u$.
By a {\em heavy component} of $\H$, we mean a connected
component whose vertices have average share larger than~$11/4$.

\begin{lem}
Let $u\in S$ be a heavy vertex.
Then $\pi_u=(1,2,2,2,3)$.
\end{lem}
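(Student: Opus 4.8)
The plan is to follow the same share-accounting strategy as in Lemma~\ref{lem:locproof}, exploiting the fact that every neighbourhood in $\cg{n}{1,3}$ sits inside an interval of length~$6$. By translation symmetry I may assume $u=0$, so that $N[0]=\{-3,-1,0,1,3\}$ and $\pi_0$ is a $5$--tuple. The single observation that drives everything is that $0\in S$ lies in $N[x]$ for every $x\in N[0]$ (adjacency is symmetric), hence $0\in S_x$ for all $x\in N[0]$: \emph{every} shadow of a vertex of $N[0]$ contains~$0$. In particular, if $|S_x|=1$ then $S_x=\{0\}$, and since $S$ is identifying at most one vertex of the whole graph can have shadow $\{0\}$. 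Thus at most one $x\in N[0]$ has $|S_x|=1$.

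Next I would pin down the multiset of shadow sizes. If no $x\in N[0]$ has $|S_x|=1$, then all five entries of $\pi_0$ are at least $2$ and $\sh(0)\le 5/2<11/4$, so $0$ is not heavy. Hence exactly one shadow has size~$1$, and writing the profile as $(1,a,b,c,d)$ with $2\le a\le b\le c\le d$, heaviness becomes $1/a+1/b+1/c+1/d>7/4$. A short check of the finitely many options (note that $(1,2,2,2,4)$ gives share exactly $11/4$ and so is excluded by the strict inequality) leaves only $\pi_0=(1,2,2,2,2)$ and $\pi_0=(1,2,2,2,3)$.

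The heart of the proof, and the step I expect to be the main obstacle, is to eliminate the profile $(1,2,2,2,2)$. The reflection $v\mapsto-v$ fixes $0$ and preserves identifying codes, so I may assume the unique singleton sits at position $0$, $1$ or $3$. In each case I would write the four remaining shadows $S_y$ ($y\in N[0]$) explicitly as $\{0\}$ together with whichever vertices of the surrounding interval lie in~$S$, encode membership in~$S$ by $0/1$ indicator variables, and impose that each of these four shadows has size \emph{exactly}~$2$. This yields a small system of linear equations in the indicators; solving it, the solutions always force two of the four shadows to share their single nonzero element, i.e.\ to coincide, which contradicts the identifying property since the corresponding vertices of $N[0]$ are distinct. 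For instance, with the singleton at $0$ one gets $-3,-1,1,3\notin S$, and the four size-$2$ constraints collapse, via the relations $t_{-6}=t_2$, $t_{-4}=t_4$, $t_{-2}=t_6$ on the even indicators, to a repeated shadow in every branch. The only delicate point is bookkeeping the overlaps between the intervals $N[y]$, since several candidate ``extra'' elements are shared between adjacent shadows; it is precisely these overlaps that force the collisions. Ruling out $(1,2,2,2,2)$ at all three positions leaves $\pi_0=(1,2,2,2,3)$, as claimed.
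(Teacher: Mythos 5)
Your proposal is correct and follows the same overall strategy as the paper: both arguments first use share bounds to cut the profile of a heavy vertex down to $(1,2,2,2,2)$ or $(1,2,2,2,3)$, and then eliminate $(1,2,2,2,2)$ by exhibiting two distinct vertices of $N[0]$ with equal shadows, contradicting the identifying property. Two differences are worth noting. First, you make explicit the observation that $0\in S_x$ for every $x\in N[0]$, so at most one entry of $\pi(0)$ equals $1$; the paper uses this silently, and your version is the more careful one --- it is in fact needed already for the paper's opening bound $\sh(u)\le 1+\frac12+\frac12+\frac13+\frac13$ in the case of two entries exceeding $2$, since without it the profile $(1,1,2,3,3)$ would not be excluded. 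Second, you organize the elimination of $(1,2,2,2,2)$ by the position of the unique singleton shadow (at $0$, $1$, or $3$ after reflection) and solve small indicator systems; your singleton-at-$0$ computation checks out (the relations $t_{-6}=t_2$, $t_{-4}=t_4$, $t_{-2}=t_6$ are what one gets by subtracting consecutive size constraints, and each of the three branches does produce a repeated shadow), and it coincides exactly with the paper's case $|S_0|=1$, whose branches are $S_1=\{0,y\}$ with $y\in\{-2,2,4\}$. For the singleton at $1$ or $3$, however, the paper has a slicker device you could adopt: in those cases $|S_0|=2$, say $S_0=\{0,x\}$ with $x\in N(0)$; then $\{0,x\}\subseteq S_x$ while the profile forces $|S_x|\le2$, so $S_x=S_0$ immediately, with no enumeration at all. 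In short, your route buys uniform, mechanical verification at the cost of extra bookkeeping of the interval overlaps, while the paper's split on $|S_0|\in\{1,2\}$ buys a two-line finish for half the work; both are sound, and both implicitly use the standing assumption $n\ge13$ so that the labels $-6,\dots,6$ denote distinct vertices.
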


\begin{proof}
If $\pi(u)$ contains at least two numbers greater than $2$, then
$$\sh(v)\le1+\frac{1}{2}+\frac{1}{2}+\frac{1}{3}+\frac{1}{3}<\frac{11}{4}.$$
This contradicts the choice of $u$ as a heavy vertex.
Then $\pi(u)=(1,2,2,2,a)$ for some integer~$a\ge2$.
Moreover, if $a\ge4$, then $\sh(u)\le11/4$.
Hence $\pi(u)$ is either $(1,2,2,2,2)$ or $(1,2,2,2,3)$.

Suppose that $\pi(u)=(1,2,2,2,2)$. We may assume $u=0$.
By the assumption on $\pi(0)$, the shadow $S_0$ has size $1$ or~$2$.
If $|S_0|=1$, then $N[0]\cap S=\{0\}$, and $|S_x|=2$
for all $x\in N(v)$.
In particular, $S_1=\{0,y\}$ where $y\in\{-2,2,4\}$. If $y=\pm2$, then
$S_{-1}=S_1$, and if $y=4$, then $S_{1}=S_{3}$.
These both contradict the identifying property of~$S$.
If $|S_0|=2$, let $S_0=\{0,x\}$. Then $\{0,x\}\subseteq S_x$, thus $|S_x|\ge2$.
Since $|S_x|\le2$, this gives $S_x=S_0$, a contradiction.
\end{proof}

Note that if $u\in S$, then $\d_\H(u)=|S_u|-1$. On the other hand,
if $u$ is a heavy vertex, it has profile $(1,2,2,2,3)$ by the above lemma,
so $|S_u|\le3$. We conclude that $\d_\H(u)\in\{0,1,2\}$.
We first prove that $\d_\H(u)\not=0$ when $u$ is a heavy vertex.

\begin{lem}
Let $u\in S$ be a heavy vertex. Then $u$ is not an isolated vertex in~$\H$.
\end{lem}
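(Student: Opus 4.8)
The plan is to argue by contradiction. Suppose some heavy vertex, which by symmetry I take to be $u=0$, is isolated in $\H$. Isolation means $S\cap N(0)=\emptyset$, i.e. $-3,-1,1,3\notin S$ while $0\in S$, so $S_0=\{0\}$ and $|S_0|=1$. By the previous lemma $\pi(0)=(1,2,2,2,3)$. Since each of $-3,-1,1,3$ is a neighbour of $0$, each of the shadows $S_{-3},S_{-1},S_1,S_3$ contains $0$ and hence has size at least~$2$; therefore the single $1$ in the profile must be $|S_0|$, and the four neighbour-shadows have sizes forming the multiset $\{2,2,2,3\}$ --- exactly one neighbour of size~$3$ and three of size~$2$.

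First I would pin down where these shadows can live. Because $-3,-1,1,3\notin S$, the only possible elements of the four neighbour-shadows are $0$ together with the even vertices $-6,-4,-2,2,4,6$; concretely $S_{-3}\subseteq\{-6,-4,-2,0\}$, $S_{-1}\subseteq\{-4,-2,0,2\}$, $S_1\subseteq\{-2,0,2,4\}$ and $S_3\subseteq\{0,2,4,6\}$, each containing~$0$. Introducing an indicator for the membership of each of $-6,-4,-2,2,4,6$ in $S$, the four sizes become sliding sums of three consecutive indicators (plus $1$ for the shared vertex~$0$), and any two adjacent shadows differ only in their two extreme positions.

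The heart of the argument is then a short finite check showing that the size pattern $\{2,2,2,3\}$ cannot be realised while keeping the four shadows pairwise distinct. The useful observation is that $S_{-1}=S_1$ unless $-4\in S$ or $4\in S$, that $S_{-3}=S_{-1}$ unless $-6\in S$ or $2\in S$, and that $S_1=S_3$ unless $-2\in S$ or $6\in S$. Using the reflection symmetry $x\mapsto-x$ (an automorphism of $\cg{n}{1,3}$ fixing $0$) I may assume the size-$3$ neighbour is $-3$ or $-1$; solving the resulting window-sum equations leaves only a handful of indicator assignments, and in each of them one of the three distinctness conditions above fails, forcing $S_1=S_3$ or $S_{-1}=S_1$. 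This contradicts the identifying property of $S$, so no heavy vertex is isolated.

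I expect the only real work to be this case analysis, and the point most in need of care is the bookkeeping of which second-neighbour vertices are forced into or out of $S$ by the size constraints. The payoff is that the contradiction is entirely local to $N[0]$ together with $\{-6,-4,-2,2,4,6\}$, so no global distinctness argument over all of $\Z_n$ is needed.
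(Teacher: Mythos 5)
Your proof is correct and takes essentially the same route as the paper: after using the profile $(1,2,2,2,3)$ and the reflection $x\mapsto -x$ to assume $|S_1|=|S_3|=2$, the paper simply notes $2,4\notin S$ (else $S_1=S_3$), hence $-2,6\in S$, so that $\{-2,0\}$ lies in the three distinct shadows $S_{-3}$, $S_{-1}$, $S_1$, two of which have size~$2$ --- a contradiction. Your window-sum enumeration is a brute-force rendering of this same local argument, and the finite check you defer does close as claimed: only three indicator assignments on $\{-6,-4,-2,2,4,6\}$ are consistent with the size pattern $\{2,2,2,3\}$, and each of them forces $S_{-1}=S_1$ or $S_1=S_3$, violating the identifying property.
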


\begin{proof}
We may assume $u=0$.
Suppose $\d_\H(0)=0$, namely, $\{-3,-1,1,3\}\cap S=\emptyset$.
Since $\pi(0)=(1,2,2,2,3)$, we may assume that $|S_1|=|S_3|=2$.
Now $2\not\in S$ and $4\not\in S$, since otherwise,
we have $S_1=S_3$, a contradiction.
Hence $-2,6\in S$. This is a contradiction since $\{-2,0\}$ is now contained in
$S_{-3}$, $S_{-1}$, and $S_1$, while these sets are distinct and two of them have
size~$2$.
\end{proof}

\begin{lem}
If $u\in S$ is a heavy vertex with $\d_\H(u)=2$, then $\H_u$ is isomorphic
to $P_2$, the path graph of length~$2$.
\label{lem:bigcomponent}
\end{lem}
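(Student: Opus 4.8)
The plan is to show that the profile forced by the heavy hypothesis already determines the whole component $\H_u$, so that no case analysis on the positions of the neighbours is required.

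First I would use the translation symmetry of $\cg{n}{1,3}$ to assume $u=0$. Since $0\in S$ and $\d_\H(0)=|S_0|-1=2$, the shadow $S_0$ has exactly three elements: the vertex $0$ together with the two neighbours of $0$ lying in $S$, say $a$ and $b$, so that $S_0=\{a,0,b\}$ and $\{a,0,b\}\subseteq S$. Because $0$ is heavy, its profile is $\pi(0)=(1,2,2,2,3)$.

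The key step is to read the shadow sizes of the four neighbours of $0$ off this profile. As $|S_0|=3$ is the unique maximal entry, the entry $3$ of $\pi(0)$ is accounted for by $0$ itself, so the four neighbours $-3,-1,1,3$ carry the shadow sizes $\{1,2,2,2\}$. Now $a$ and $b$ lie in $S$ and are adjacent to $0$, so each of $S_a,S_b$ contains both the vertex itself and $0$; hence $|S_a|,|S_b|\ge2$. Since the multiset $\{1,2,2,2\}$ has a single $1$, this forces $|S_a|=|S_b|=2$, and therefore $S_a=\{0,a\}$ and $S_b=\{0,b\}$. In particular $\d_\H(a)=\d_\H(b)=1$: the only neighbour of $a$ (and of $b$) in $S$ is $0$.

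It remains to assemble the component. From $0$ the only $\H$-neighbours are $a$ and $b$, and from each of $a,b$ the only $\H$-neighbour is $0$; hence $\H_0=\{a,0,b\}$. Moreover $b\notin S_a=\{0,a\}$ shows that $a$ and $b$ are non-adjacent (one may also check directly that the pairwise differences of the neighbours of $0$ all lie outside $\{1,3\}$), so $\H_0$ is exactly the path with centre $0$ and leaves $a,b$, that is $P_2$. Here $n\ge13$ is used only to guarantee that $-3,-1,0,1,3$ are five distinct vertices, ruling out wrap-around degeneracies. I do not expect a genuine obstacle: the whole argument rests on correctly matching the entry $3$ of $\pi(0)$ with $|S_0|$ and concluding that both $S$-neighbours of $0$ have shadow size exactly $2$; once that bookkeeping is done, the degree-one conclusion for $a$ and $b$, and hence the $P_2$ structure, follows with no further case distinction.
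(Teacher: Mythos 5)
Your proposal is correct and follows essentially the same route as the paper's own proof: both match the entry $3$ of $\pi(u)=(1,2,2,2,3)$ with $|S_u|=3$, conclude that the two $S$-neighbours of $u$ must have shadow size exactly $2$ (since they each contain themselves and $u$, and the remaining multiset $\{1,2,2,2\}$ has only one $1$), and hence $\H$-degree $1$, which pins the component down as $P_2$. Your version merely spells out the bookkeeping (including the non-adjacency of the two leaves via $b\notin S_a$) that the paper leaves implicit.
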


\begin{proof}
Let $N_\H(u)=\{v,w\}$. Then $\d_\H(v)\ge1$ and $\d_\H(w)\ge1$. Since $|S_u|=3$ and
$u$ is heavy, we must have $\d_\H(v)=1$ and $\d_\H(w)=1$. Thus none of the vertices
in the set $\{u,v,w\}$ has a neighbour outside this set in~$\H$.
\end{proof}

\begin{lem}
Every heavy component of $\H$ is isomorphic to $P_2$.
\label{lem:P2}
\end{lem}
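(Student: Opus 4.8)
The plan is to reduce to the two possible $\H$-degrees of a heavy vertex and dispatch each. Let $C$ be a heavy component of $\H$. Since the vertices of $C$ have average share exceeding $11/4$, at least one $u\in C$ has $\sh(u)>11/4$, so $C$ contains a heavy vertex $u$. By the lemma that a heavy vertex is not isolated in $\H$ we have $\d_\H(u)\ge1$, and since a heavy vertex has profile $(1,2,2,2,3)$ we have $|S_u|\le3$, hence $\d_\H(u)=|S_u|-1\le2$. Thus $\d_\H(u)\in\{1,2\}$. If $\d_\H(u)=2$, then Lemma~\ref{lem:bigcomponent} immediately gives $C=\H_u\cong P_2$ and we are done. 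So the content of the proof is the case $\d_\H(u)=1$, where the goal is to show $C\cong P_2$ with $u$ as one of its two endpoints.

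First I would rule out the smallest obstruction: $\H$ has no component isomorphic to $K_2$ (an isolated edge). Indeed, if $\{a,b\}$ were such a component, then the only $S$-neighbour of $a$ is $b$ and conversely, so $S_a=\{a,b\}=S_b$; since $a\ne b$ this violates the identifying property of $S$. Consequently the unique $\H$-neighbour $w$ of our degree-one heavy vertex $u$ cannot be a pendant whose only neighbour is $u$, so $\d_\H(w)\ge2$. On the other hand $w\in N[u]$, so $|S_w|$ is an entry of $\pi(u)=(1,2,2,2,3)$ and therefore $|S_w|\le3$, giving $\d_\H(w)\le2$. Hence $\d_\H(w)=2$ exactly, and $w$ has precisely two $S$-neighbours, namely $u$ and one further vertex $z$.

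It remains to show that $z$ is a pendant of $w$ and that $C=\{u,w,z\}$, which forces $C\cong P_2$. Here I would work in coordinates, taking $u=0$ and splitting into cases according to which of $-3,-1,1,3$ is the neighbour $w$ (reduced by the reflection automorphism $x\mapsto-x$). The extra leverage comes from $\pi(0)=(1,2,2,2,3)$: since $|S_0|=2$, the four neighbours of $0$ have shadow sizes forming the multiset $(1,2,2,3)$, so one neighbour $y$ satisfies $|S_y|=1$; as $0\in S_y$ this means $S_y=\{0\}$, i.e. the whole closed neighbourhood of $y$ meets $S$ only at $0$. In each case this strong local constraint, together with the distinctness of the shadows $S_x$ and the fact that the two remaining neighbours of $0$ have shadow size $2$, should pin down the positions of $w$ and $z$ and show that $z$ has no $S$-neighbour other than $w$; thus $z$ is a pendant and no further vertex of $S$ attaches to $\{u,w,z\}$.

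The main obstacle is exactly this last step: turning the degree bound $\d_\H(w)=2$ into the global statement that the component closes up as $P_2$ rather than extending through $z$ (or through further vertices forced by the identifying condition). The position analysis for $w$ and for where the forced shadow $S_y=\{0\}$ sits is the delicate part, since one must repeatedly invoke the identifying property to forbid two vertices from acquiring equal shadows; by contrast, the reductions $\d_\H(u)\le2$, $\d_\H(w)=2$, and the elimination of isolated edges are short and robust. Note that this route never needs the averaging inequality of Lemma~\ref{lem:slater}, because a degree-one heavy vertex already forces its entire component to be $P_2$.
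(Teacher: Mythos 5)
Your reductions up to the degree computation are sound and agree with the paper: a heavy component contains a heavy vertex $u$; $\d_\Gamma(u)=|S_u|-1\le2$ because the heavy profile $(1,2,2,2,3)$ forces $|S_u|\le3$; the case $\d_\Gamma(u)=2$ is settled by Lemma~\ref{lem:bigcomponent}; and your explicit exclusion of $K_2$-components via $S_a=\{a,b\}=S_b$ is correct (the paper leaves this small case implicit). The genuine gap is in your main case $\d_\Gamma(u)=1$, and it is not merely that the ``delicate part'' is left as a sketch: the statement you set out to prove there --- that a degree-one heavy vertex forces its entire component to be $P_2$ --- is \emph{stronger} than the lemma and does not follow from the hypotheses. ``Heavy component'' means the \emph{average} share over the component exceeds $11/4$; after extracting a single heavy vertex you discard this hypothesis completely, and the identifying property alone will not close the component at three vertices. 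Concretely, take $u=0$ with $S\supseteq\{-6,0,3,4,7\}$ and $S\cap\{-4,-3,-2,-1,1,2,5,6\}=\emptyset$: then $S_{-1}=\{0\}$, $S_{-3}=\{-6,0\}$, $S_0=\{0,3\}$, $S_1=\{0,4\}$, $S_3=\{0,3,4\}$, so $\pi(0)=(1,2,2,2,3)$ and $0$ is heavy with $\d_\Gamma(0)=1$, yet its component contains the $\Gamma$-path $0,3,4,7$ on at least four vertices. All shadows in this window are distinct, and nothing in the paper forbids completing such a configuration to an identifying code; the paper never claims components like this are impossible --- its proof shows only that they are \emph{not heavy}. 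So the local position analysis you defer to (``should pin down the positions of $w$ and $z$\dots'') is precisely where the attempt would fail: there is no structural contradiction to be found.

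This is why the paper's treatment of the $\ge4$-vertex case is an averaging argument that cannot be dispensed with. In your notation: since the component has order at least $4$, the mate $w$ of the heavy vertex $u$ has $\d_\Gamma(w)=2$, and its second neighbour $z$ has $\d_\Gamma(z)\ge2$, hence $|S_z|\ge3$; together with the fact that at most one neighbour of $w$ can have the singleton shadow $\{w\}$, this gives $\sh(w)\le\frac{1}{1}+\frac{2}{2}+\frac{2}{3}=\frac{8}{3}$ and so $\sh(u)+\sh(w)\le\frac{11}{2}$. Moreover $|S_w|=3$ and $|S_z|\ge3$ show $z$ is not heavy, so each such $w$ is the mate of only one heavy vertex. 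Since every non-heavy vertex has share at most $11/4$, averaging over the component bounds its average share by $11/4$, contradicting heaviness --- and this averaging is the whole content of the $\d_\Gamma(u)=1$ case. (Your closing remark that Lemma~\ref{lem:slater} is not needed is equally true of the paper's proof, which uses it only later in Theorem~\ref{thm:idmain}; but the in-component averaging built into the very definition of a heavy component is exactly the tool your route abandons.) To repair your proposal, keep your setup but, when $\d_\Gamma(z)\ge2$, stop seeking a structural contradiction and instead run the mate-pairing and share-averaging computation above.
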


\begin{proof}
Suppose that $\H$ has a heavy component with at least $4$ vertices.
Then this component has at least one heavy vertex~$u$.
By Lemma~\ref{lem:bigcomponent}, we have $\d_\H(u)=1$. Let $N_\H(u)=\{v\}$.
The vertex $v$ is called the {\em mate} of~$u$.
Since $u$ is heavy and since $\H_u$ has order at least $4$, we have $\d_\H(v)=2$.
Let $N_\H(v)=\{u,w\}$. Since $\H_v$ has order at least $4$, we have $\d_\H(w)\ge2$,
thus $|S_w|\ge3$. This shows that
$\sh(v)\le\frac{1}{1}+\frac{2}{2}+\frac{2}{3}=\frac{8}{3}$,
which in turn gives $\sh(u)+\sh(v)\le11/2$.
On the other hand, since $|S_v|=3$ and $|S_w|\ge3$, $w$ is not heavy.
Therefore, $v$ is not the mate of two heavy vertices.
Averaging $\sh(x)$ over all $x\in V(\H_u)$, we see that each heavy vertex and
its mate contribute $11/2$ together.
Since every other vertex has share less than ${11}/{4}$,
the average share of the vertices of $\H_u$ is at most~${11}/{4}$.
This contradicts the assumption that $\H_u$ is heavy.
\end{proof}

\begin{lem}
Every heavy component of $\H$ is isomorphic to a path of length $2$,
all whose vertices are heavy.
Moreover, the vertices of this component are of the form $\{u-1,u,u+3\}$ 
{or $\{u-3,u,u+1\}$} for some {$u\in\Z_n$}.
\label{lem:P2more}
\end{lem}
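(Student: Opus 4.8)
The plan is to build on Lemma~\ref{lem:P2}: a heavy component $K$ is a path of length~$2$, so I may name its centre $v$ (with $\d_\H(v)=2$, hence $|S_v|=3$) and its two endpoints $w$ (each with $\d_\H(w)=1$, hence $|S_w|=2$). Using a translation and, where convenient, the reflection $x\mapsto -x$ — both automorphisms of $\cg{n}{1,3}$ — I set $v=0$. The two $\H$--neighbours of $0$ then form, up to reflection, one of the pairs $\{-1,1\}$, $\{-3,3\}$, $\{-1,-3\}$, or $\{-1,3\}$, the last being the asserted shape $\{u-1,u,u+3\}$ (its reflection gives $\{u-3,u,u+1\}$). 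Two facts from the earlier lemmas will be used repeatedly: by the profile lemma every $x\in S$ has either $\sh(x)\le 11/4$ or $\sh(x)=17/6$, and \emph{no} profile of share exceeding $11/4$ other than $(1,2,2,2,3)$ can occur — in particular $(1,1,2,2,3)$ and every profile containing two entries $\ge 3$ are forbidden. I also record the standing remark that each neighbour of an endpoint other than $v$ is barred from $S$ (otherwise that endpoint would have degree $\ge 2$ in $\H$), which fixes many shadows near $K$.

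First I discard the three wrong pairs. For $\{-3,3\}$ the off--component neighbours $\pm1$ of $0$ have all their other neighbours barred, so $S_{-1}=S_{1}=\{0\}$, contradicting the identifying property. For $\{-1,1\}$ and $\{-1,-3\}$ there is no unconditional collision, so instead I bound the total share of $K$ by double counting, writing $\sum_{x\in K}\sh(x)=\sum_{y}|K\cap N[y]|/|S_y|$. The mechanism is that the degree--$1$ condition makes a pair of nearby non--code vertices (for $\{-1,1\}$ the vertices $\pm2$; for $\{-1,-3\}$ the pairs $\{1,3\}$ and $\{-2,-4\}$) share a shadow unless the identifying property forces a farther code vertex into $S$; each such forced vertex raises two or three of the sizes $|S_y|$ appearing in the sum. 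Carrying out the count, the total is at most $23/3<33/4$, so $K$ cannot be heavy. Hence only the pair $\{-1,3\}$ survives, which is the claimed form and, after reflecting, $\{u-3,u,u+1\}$.

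It remains to prove that in the form $\{-1,0,3\}$ all three vertices are heavy. The centre is heavy automatically: the endpoints bar $-2,2,4$ from $S$, forcing $S_1=\{0\}$, and then the forbidden profile $(1,1,2,2,3)$ forces $|S_{-3}|=2$, so $0$ has profile $(1,2,2,2,3)$. With the centre heavy ($\sh=17/6$) I argue that an endpoint, say $-1$, cannot be non--heavy. If it were, heaviness of $K$ would push $\sh(-1)$ up to its maximal non--heavy value $8/3$ (profile $(1,2,2,3,3)$, valid because $-1$'s profile must contain the centre's $3$), which forces a \emph{second} neighbour of $-1$ to have shadow size $3$. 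Since $|S_{-2}|\le 2$, that neighbour is $-4$ or $2$. If it is $2$, then $3$ also has two $3$'s in its profile and is non--heavy, dropping $\sum_{x\in K}\sh(x)$ below $33/4$; if it is $-4$, then $-5,-7\in S$, which collides with the $|S_{-2}|=1$ forced by the profile $(1,2,2,3,3)$. Both are contradictions, so $-1$ is heavy; the symmetric argument (using that $6$ is barred as a distance--$3$ neighbour of the endpoint $3$) gives that $3$ is heavy, and reflection handles $\{u-3,u,u+1\}$.

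The hard part is the quantitative core of the middle two paragraphs: the share bound for $\{-1,1\}$ and $\{-1,-3\}$, and the endpoint argument, each of which requires me to tabulate $|S_x|$ for all $x$ within numeric distance $4$ (occasionally $6$) of the centre and to invoke the identifying property several times to force the extra code vertices, while constantly checking against the share quantisation and the forbidden--profile list. I would organise this as one small table of shadow sizes per configuration, from which the double--counted sum $\sum_{x\in K}\sh(x)$ can be read off directly.
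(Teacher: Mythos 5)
Your proposal is correct, and its first half is essentially the paper's own proof: the same four candidate shapes up to translation and reflection, the same immediate collision $S_{-1}=S_{1}=\{0\}$ eliminating $\{-3,0,3\}$, and the same share-averaging for the two remaining wrong shapes, driven by the same identifying-forced code vertices (one of $\pm5$ for $\{-1,0,1\}$, and for your $\{-1,-3\}$ shape the vertex corresponding to the paper's forced $6$). Your total bound $23/3<33/4$ checks out and is in fact marginally sharper than the paper's $3\cdot\frac{47}{18}$. Where you genuinely depart is the ``all three vertices are heavy'' half. The paper keeps forcing structure: heaviness of the component excludes $-5,5,7$ from $S$ (each would pull an endpoint's share down to $7/3$), and the identifying property then forces $-6,-7,9\in S$, after which all three profiles are read off directly as $(1,2,2,2,3)$. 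You instead observe that the centre is heavy unconditionally ($S_1=\{0\}$ is forced by the barred vertices, and $-6\in S$ follows since $S_{-3}=S_1$ is impossible) and then run a quantisation argument on an endpoint: a non-heavy endpoint of a heavy component must have share exactly $8/3$, hence profile $(1,2,2,3,3)$, and both placements of the second size-$3$ shadow die. I verified the enumeration behind this: with $|S_{-1}|=2$ and $|S_0|=3$ fixed, the only achievable non-heavy share exceeding $\frac{33}{4}-2\cdot\frac{17}{6}=\frac{31}{12}$ is indeed $8/3$, and the subcases $|S_2|=3$ and $|S_{-4}|=3$ collapse exactly as you say. Your route is shorter and more conceptual; the paper's route buys the explicit pattern of $S$ near a heavy component, which the proof of Theorem~\ref{thm:idmain} later cites verbatim (``by the proof of Lemma~\ref{lem:P2more}''), so if your proof replaced the paper's, that structural information would have to be re-derived there.

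Two repairs are needed, neither fatal. First, your appeal to ``the symmetric argument'' for the endpoint $3$ overstates the symmetry: the component $\{-1,0,3\}$ is not symmetric in its endpoints ($-1$ hangs by a $1$-edge, $3$ by a $3$-edge), so for $3$ the second size-$3$ shadow can sit at $S_2$, $S_4$ or $S_6$, three cases rather than two, each needing its own (easy) contradiction: $|S_2|=3$ forces $5\in S$, putting two $3$'s in the profile of the already-established-heavy $-1$; $|S_4|=3$ forces $5,7\in S$, so $|S_2|=3$ and $|S_6|\ge3$ as well, leaving no slot for the required $1$-entry; and $|S_6|=3$ requires the $1$-entry to be $|S_4|=1$, i.e.\ $5,7\notin S$, whence $|S_6|\le2$, a contradiction. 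Second, your preliminary claim that ``every profile containing two entries $\ge3$ is forbidden'' is not what you mean: such profiles occur freely in $S$; what is true, and what you actually use later, is that a vertex with two shadow sizes $\ge3$ in its closed neighbourhood has share at most $8/3<11/4$ and therefore cannot be heavy.
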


\begin{proof}
Consider a heavy component of $\H$ and let $W$ denote its vertex
set. Without loss of generality, we may assume that $0\in W$, thus
we may refer to this component as~$\H_0$. By Lemma~\ref{lem:P2},
we know that $|W|=3$. By symmetries of $\cg{\infty}{1,3}$, we may
assume that $W$ is one of the sets $\{-3,0,3\}$, $\{-1,0,1\}$,
$\{-1,0,2\}$, or $\{-1,0,3\}$. In what follows, we show that the
first three of these choices yield to a contradiction.

If $W=\{-3,0,3\}$, then $\{\pm1,\pm2,\pm4,\pm6\}\cap S=\emptyset$.
This gives $S_{-1}=S_{1}=\{0\}$, a contradiction.

If $W=\{-1,0,1\}$, then $\{\pm2,\pm3,\pm4\}\cap S=\emptyset$.
If none of the vertices $-5,5$ is in $S$, we have $S_{-2}=S_2=\{-1,1\}$,
a contradiction. Therefore, at least one of $-5$ and $5$, say $5$, is in $S$.
This gives $|S_{-2}|,|S_4|\ge2$, $|S_0|=|S_2|=3$, and $|S_1|=2$,
hence $\sh(1)\le{13}/{6}$. We now obtain
$\frac{1}{3}\big(\sh(-1)+\sh(0)+\sh(1)\big)\le{47}/{18}<{11}/{4}$.
This contradicts the choice of $\H_0$.

If $W=\{-1,0,2\}$, then $6\in S$ since otherwise, $S_1=S_3=\{0,2\}$.
This gives $\sh(2)\le13/6$, which yields a contradiction similarly to the previous
case.

Therefore, if $\H_0$ is heavy, then $W=\{-1,0,3\}$, up to symmetries of $\cg{n}{1,3}$.
It remains to prove that all vertices in $W$ are heavy.
With $W=\{-1,0,3\}$ we obtain $\{-4,-3,-2,1,2,4,6\}\cap S=\emptyset$.
Moreover, $-5\not\in S$ since otherwise, $|S_{-4}||S_{-2}|,|S_{-1}|,|S_2|\le2$ and
$|S_0|=3$ which give $\sh(-1)\le{7}/{3}$. This is a contradiction with the choice of~$\H_0$.
Similarly,
$5\not\in S$ and $7\not\in S$ since otherwise, $\sh(3)\le{7}/{3}$.
On the other hand, $-6\in S$ since otherwise, $S_{-3}=S_1=\{0\}$,
$-7\in S$ since otherwise, $S_{-4}=S_{-2}=\{-1\}$,
and $9\in S$ since otherwise, $S_4=S_6=\{3\}$.
We obtain $\pi(-1)=\pi(0)=\pi(3)=(1,2,2,2,3)$.
\end{proof}

\begin{thm}
For every $n\ge13$ we have $\id\big(\cg{n}{1,3}\big)\ge 4n/11$.
\label{thm:idmain}
\end{thm}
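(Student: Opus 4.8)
The plan is to mirror the proof of Theorem~\ref{thm:locmain}: invoke Lemma~\ref{lem:slater} to write $n=\sum_{u\in S}\sh(u)$, and then show that the right-hand side is at most $\frac{11}{4}|S|$, which immediately gives $|S|\ge 4n/11$. By definition a non-heavy vertex has share at most $\frac{11}{4}$, so the entire difficulty is concentrated in the heavy vertices. First I would organise $S$ by the connected components of $\H$. A component that is not heavy has average share at most $\frac{11}{4}$ and therefore contributes at most $\frac{11}{4}$ times its order. By Lemmas~\ref{lem:P2} and~\ref{lem:P2more}, every heavy component is a path of length two on the three vertices $\{u-1,u,u+3\}$ (or its mirror image), each of profile $(1,2,2,2,3)$; hence its three vertices carry total share exactly $3\cdot\frac{17}{6}=\frac{17}{2}=\frac{11}{4}\cdot3+\frac14$. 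Thus each heavy component overshoots the $\frac{11}{4}$-per-vertex budget by exactly $\frac14$, and the theorem reduces to cancelling this excess.

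Next I would set up a charging scheme that assigns to each heavy component a nearby code vertex carrying a compensating deficit of at least $\frac14$. Lemma~\ref{lem:P2more} pins down the local picture around a heavy component $\{u-1,u,u+3\}$: the positions $u-5,\dots,u-2$, $u+1,u+2$ and $u+4,\dots,u+7$ are empty, while $u-7,u-6,u+9\in S$, and domination of $u+5$ forces $u+8\in S$ as well. The key point is that the identifying (not merely dominating) hypothesis couples the shadows of the outer neighbours: two distinct vertices cannot both have a singleton shadow equal to the same forced code vertex, which repeatedly pushes the outer shadow sizes up to two or three. Exploiting this, I would show that at least one of the flanking code vertices $u-6,u-7,u+8,u+9$ has share at most $\frac52$, and designate it the \emph{mate} of the component. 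Grouping the three heavy vertices with this mate gives four code vertices of total share at most $\frac{17}{2}+\frac52=11=\frac{11}{4}\cdot4$, so the group is within budget.

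Finally I would verify that the assignment is injective around the cycle: a mate is never claimed by two heavy components and is never itself heavy. This uses that two heavy components cannot lie within a bounded distance, since their forced empty stretches and forced code positions are mutually incompatible at short range; for instance $\{u+8,u+9\}$ cannot extend to a heavy component, as that would force $u+2\in S$, contradicting the forced empty stretch $u+1,u+2$. With injectivity in hand, summing the budgets over the groups (heavy component plus mate) and over the remaining non-heavy vertices yields $\sum_{u\in S}\sh(u)\le\frac{11}{4}|S|$, whence the bound. Because $\cg{n}{1,3}$ is a cycle with no boundary, there is no $O(1)$ loss here, unlike in the density estimate of Theorem~\ref{thm:infloc}.

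The hard part will be the second step. Locally, the forced structure alone does not bound the share of a flanking vertex below $\frac{11}{4}$: an adversary can arrange the undetermined outer vertices so as to inflate it. The real leverage comes from the distinctness constraints of an identifying code, which force many nearby shadows to have size at least two; marshalling exactly the right couplings to guarantee a $\frac14$ deficit for a uniquely-assignable mate of \emph{every} heavy component, in every configuration, is where the bulk of the case analysis lies, and I expect it to be the main obstacle.
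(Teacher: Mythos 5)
Your overall architecture does match the paper's proof: share counting via Lemma~\ref{lem:slater}, heavy components pinned down by Lemmas~\ref{lem:P2} and~\ref{lem:P2more} as paths $\{u-1,u,u+3\}$ with total share exactly $3\cdot\tfrac{17}{6}=\tfrac{11}{4}\cdot3+\tfrac14$, and a charging scheme with an injectivity check around the cycle. But your pivotal second step --- that some single flanking vertex among $u-7,u-6,u+8,u+9$ always has share at most $\tfrac52$ --- is false, and the configuration that kills it also refutes your heuristic that two heavy components cannot lie within bounded distance. Normalize $u=0$, $W=\{-1,0,3\}$, and consider $S\cap[-17,15]=\{-17,-16,-15,-10,-7,-6,-1,0,3,8,9,10,14\}$. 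This is consistent with every constraint forced by Lemma~\ref{lem:P2more} (indeed $8,9,10,14\in S$ are forced exactly as in the paper: $S_5=\{8\}$ by domination, then $10\in S$ else $S_5=S_7$, and $14\in S$ else $S_7=S_{11}=\{8,10\}$), and one checks that all shadows in this window are nonempty and pairwise distinct; the pattern is two mirrored copies of the extremal periodic code $B_\infty$ glued with an offset, so it genuinely occurs. Here $\{-10,-7,-6\}$ is a \emph{second} heavy component, of the mirrored type $\{v-3,v,v+1\}$, interlocking with $\{-1,0,3\}$ across the empty stretch $[-5,-2]$ --- heavy components can sit at gap four. Computing shares: $\sh(-7)=\sh(-6)=\sh(9)=\tfrac{17}{6}$ and $\sh(8)=\tfrac{8}{3}$ (e.g.\ $\sh(9)=\tfrac12+\tfrac12+\tfrac13+\tfrac12+1$ using $S_{12}=\{9\}$), all strictly greater than $\tfrac52$. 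So in the case $\{u+11,u+12,u+13\}\cap S=\emptyset$ no admissible mate exists in your candidate set, and your $\tfrac14$-deficit cancellation breaks down precisely where the bulk of the work lies --- as you yourself anticipated.

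The paper escapes this by using \emph{set-valued} mates on one fixed side of the component. If $\{u+11,u+12,u+13\}\cap S\neq\emptyset$ it charges $\{u+8,u+9\}$, and there your single-vertex idea does work since $\sh(u+9)\le\tfrac94$ in each subcase. If $\{u+11,u+12,u+13\}\cap S=\emptyset$, it charges the whole non-heavy component $\{u+8,u+9,u+10\}$, whose \emph{average} share is at most $\tfrac{47}{18}<\tfrac{11}{4}$ by the $\{-1,0,1\}$-type analysis inside Lemma~\ref{lem:P2more}; the vertex actually carrying the deficit there is $u+10$, with $\sh(u+10)\le\tfrac{13}{6}$, which is absent from your list (note $u+10\in S$ is always forced, a fact your local picture omits). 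There is also a bookkeeping hazard in your plan even after adding $u+10$ as a candidate: extracting a single low-share vertex from the component $\{u+8,u+9,u+10\}$ leaves behind $u+8,u+9$ with shares up to $\tfrac83$ and $\tfrac{17}{6}$, each exceeding the per-vertex budget $\tfrac{11}{4}$ (a vertex can be heavy while its component is not), so your final summation ``over the remaining non-heavy vertices'' is not valid as stated; the paper instead bounds the average over the whole block consisting of the heavy component together with its mate set, and secures disjointness of blocks via the forced gap $u+4,\dots,u+7\notin S$ and the forced $u+14\in S$. As written, then, the central step of your proposal fails and needs the paper's block-averaging repair.
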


\begin{proof}
Let $S$ be an identifying code in $\cg{n}{1,3}$, where $n\ge13$.
We assign to each heavy component of $\H$, a unique subset of $S$
referred to as the {\em mate} of that component, such that the average
share of vertices in a heavy component and its mate together is at most~$11/4$.

Let $\H_u$ be a heavy component of $\H$. By the proof of Lemma~\ref{lem:P2more},
and by symmetry, we may assume that $u=0$ and $V(\H_0)=W=\{-1,0,3\}$.
Then $S\cap[-7,10]=\{-7,-6,-5,-1,0,3,8,9,10\}$. This is by the proof of
Lemma~\ref{lem:P2more}, and that if $8\not\in S$,
then $S_5=\emptyset$. Also if $10\not\in S$, then $S_5=S_7=\{8\}$.

If $\{11,12,13\}\cap S\not=\emptyset$, the mate of $\H_0$ is defined to be
the set $W'=\{8,9\}$.
If $11\in S$, then $\sh(8)\le5/2$ and $\sh(9)\le2$.
If $12\in S$, then $\sh(8)\le31/12$ and $\sh(9)\le9/4$.
If $13\in S$, then $\sh(8)\le17/6$ and $\sh(9)\le13/6$.
In either of these cases we have
$$\frac{1}{5}\Big(\sh(-1)+\sh(0)+\sh(3)+\sh(8)+\sh(9)\Big)\le\frac{11}{4}.$$

If $\{11,12,13\}\cap S=\emptyset$, we see that $\H_8\iso P_2$ and by the proof of
Lemma~\ref{lem:P2more}, we have $\frac{1}{3}(\sh(8)+\sh(9)+\sh(10))\le\frac{47}{18}$.
In this case we assign $W''=\{8,9,10\}$ as the mate of $\H_0$, and we have
$$\frac{1}{6}\Big(\sh(-1)+\sh(0)+\sh(3)+\sh(8)+\sh(9)+\sh(10)\Big)
 \le\frac{1}{2}\Big(\frac{17}{6}+\frac{47}{18}\Big)<\frac{11}{4}.$$

The mates defined above do not contain any mates assigned in the proof of Lemma~\ref{lem:P2},
since those are adjacent to a heavy vertex.
On the other hand, since there are four vertices $4,5,6,7$ not in $S$,
between $W$ and each of $W'$ and $W''$ we see that $W'$ does not overlap with any
mate assigned to other heavy components.
Moreover, if $14\not\in S$, then $S_{7}=S_{11}=\{8,10\}$, a contradiction.
Thus $W''$ does not overlap any other mates (the four vertex gap is not present after $W''$).

We conclude that the average share of the vertices of $S$ is at most $11/4$,
which by Lemma~\ref{lem:slater} gives $|S|\ge 4n/11$.
\end{proof}

Similarly to the proof of Theorem~\ref{thm:infloc}, we may prove the following theorem.

\begin{thm}
Every identifying code in $\cg{\infty}{1,3}$ has density at least ${4}/{11}$.
\end{thm}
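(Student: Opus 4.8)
The plan is to mirror the argument of Theorem~\ref{thm:infloc}, transferring the finite-graph share bookkeeping developed for Theorem~\ref{thm:idmain} into a density statement on $\cg{\infty}{1,3}$. First I would observe that all of the structural lemmas leading up to Theorem~\ref{thm:idmain}---Lemmas on heavy vertices, Lemma~\ref{lem:P2}, and especially Lemma~\ref{lem:P2more}---were proved using only local arguments valid in $\cg{\infty}{1,3}$ (indeed the proofs reason about fixed integer intervals such as $[-7,10]$ around a heavy component), so they apply verbatim in the infinite setting. Thus a heavy component of $\H$ is a path of length $2$ on a vertex set of the form $\{u-1,u,u+3\}$ or its reflection, and the mate-assignment of Theorem~\ref{thm:idmain}, associating to each heavy component a set $W'$ or $W''$ within bounded numeric distance, carries over.

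Next I would fix a positive integer $N$ and set $S'=S\cap[-N,N]$. The key point, exactly as in the proof of Theorem~\ref{thm:infloc}, is that every heavy component and its assigned mate lie within a bounded numeric window, so only a bounded number of heavy components near the two endpoints $\pm N$ can have a mate that is truncated by the interval. Each such boundary defect perturbs the total share by at most an additive constant. Combining the per-component average-share bound $11/4$ from the proof of Theorem~\ref{thm:idmain} with these finitely many boundary corrections, I would derive an inequality of the form
\[\sum_{u\in S'}\sh(u)\le\frac{11}{4}\,|S'|+c_0,\]
where $c_0$ is an absolute constant independent of $N$ coming from the endpoint effects.

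For the lower bound on the same sum, I would reproduce the double-counting computation from Theorem~\ref{thm:infloc}:
\[\sum_{u\in S'}\sh(u)=\sum_{u\in S'}\sum_{x\in N[u]}\frac{1}{|S_x|}
  \ge\sum_{x\in[-N,N]}\sum_{u\in S_x}\frac{1}{|S_x|}=2N+1,\]
the inequality again arising because some neighbours of vertices in $S'$ fall outside $[-N,N]$. Chaining the two bounds yields $2N+1\le\frac{11}{4}|S'|+c_0$, hence
\[\frac{\big|S\cap[-N,N]\big|}{2N+1}\ge\frac{4}{11}-\frac{O(1)}{2N+1},\]
and letting $N\to\infty$ gives $\rho(S)\ge4/11$.

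The main obstacle is the bookkeeping of the endpoint corrections: in the locating case the mate of a heavy vertex was a single nearby vertex, so the boundary loss was a clean $2/3$, whereas here a heavy component's mate is a two- or three-vertex set ($W'$ or $W''$) whose own share bound depends on which case of Theorem~\ref{thm:idmain} applies. I would need to verify that the per-vertex share of any vertex is bounded above (it is, since $\sh(u)\le 1+4\cdot\frac12<3$ always, with heavy vertices bounded by the profile $(1,2,2,2,3)$), so that a fixed finite number of truncated mates near each endpoint contributes only a bounded constant $c_0$; the precise value of $c_0$ is irrelevant because it is washed out in the limit. Once this uniform boundedness is in hand, the density argument closes exactly as above.
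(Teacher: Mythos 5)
Your proposal is correct and follows essentially the same route as the paper, whose entire proof of this theorem is the remark that it goes ``similarly to the proof of Theorem~\ref{thm:infloc}''---that is, exactly your plan of combining the share bookkeeping and mate assignment of Theorem~\ref{thm:idmain} (whose supporting lemmas are indeed local) with the windowed double-counting over $[-N,N]$. One cosmetic slip: $1+4\cdot\frac{1}{2}=3$, so your uniform bound should read $\sh(u)\le3$ rather than $\sh(u)<3$, which is harmless since any absolute bound on shares suffices to absorb the endpoint corrections into the constant $c_0$.
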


In the remainder of this section, we provide constructions of identifying codes
in circulant graphs $\cg{n}{1,3}$. From Theorem~\ref{thm:idmain}, we know that
such codes have size at least $\ceil{4n/11}$. We give general constructions for
$n\ge11$. These codes have size $\ceil{4n/11}$, unless when $n\equiv8\mod11$, where
the constructed code has size $\ceil{4n/11}+1$. We do not know whether this is
best possible, but using a brute-force computer search, we verified that
for $n=19,30,41$, an identifying code of size $\ceil{4n/11}$ does not exist.
For {$n<13$}, we verified using this program that
$\id\big(\cg{7}{1,3}\big)=\id\big(\cg{9}{1,3}\big)=\id\big(\cg{10}{1,3}\big)=4$,
and $\id\big(\cg{8}{1,3}\big)=6$.

For a nonnegative integer $t$, let
\[B_t=\big\{11i+j\st 0\le i\le t-1\text{ and }j\in\{0,4,5,6\}\big\}.\]
In particular, $B_0=\emptyset$.
It is easy to see that $B_t$ is indeed an identifying code in $\cg{11t}{1,3}$.
The sets $B_t$ can indeed be used in constructions of identifying codes for
the graphs $\cg{n}{1,3}$ when $n$ is not necessarily a multiple of~$11$.
Such constructions are presented in Table~\ref{tbl:id}.
\begin{table}[ht]
\centering
\begin{tabular}{lcl}
\hline
$n$ &&  An identifying code for $\cg{n}{1,3}$\\
\hline
$11t$    &&  $B_t$\\
$11t+1$  && $B_t\cup\{11t-4\}$\\
$11t+2$  && $B_{t-1}\cup\{11t-11,11t-10,11t-5,11t-4,11t-1\}$\\
$11t+3$  && $B_t\cup\{11t, 11t+1\}$\\
$11t+4$  && $B_t\cup\{11t, 11t+1\}$\\
$11t+5$  && $B_{t-1}\cup\{11t-11,11t-10,11t-5,11t-4,11t+1,11t+2\}$\\
$11t+6$  && $B_t\cup\{11t, 11t+1,11t+4\}$\\
$11t+7$  && $B_t\cup\{11t, 11t+1,11t+4\}$\\
$11t+8$  && $B_t\cup\{11t, 11t+1,11t+6,11t+7\}$\\
$11t+9$  && $B_t\cup\{11t, 11t+1,11t+6,11t+7\}$\\
$11t+10$ && $B_t\cup\{11t, 11t+1,11t+6,11t+7\}$\\
\hline
\end{tabular}
\caption{Constructions of identifying codes for the circulant graphs $\cg{n}{1,3}$.
         Here $t$ is a positive integer.}
\label{tbl:id}
\end{table}

We omit the proofs here. The proof are straight-forward, and all take advantage
of the ``local'' structure of the graphs $\cg{n}{1,3}$, namely the fact that
each neighbourhood is contained in an interval of length~$6$.
We present an example of these codes in Figure~\ref{fig:ideg}.
These results are summarized in the next theorem.

\begin{figure}[ht]
\centering
\includegraphics{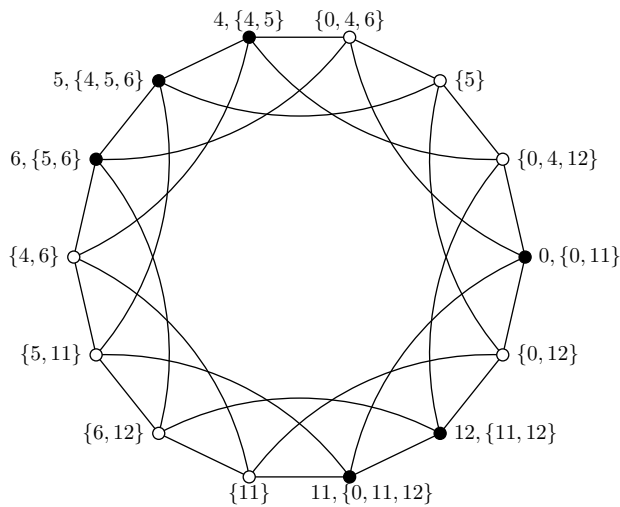}
\caption{A minimum identifying code of $\cg{14}{1,3}$.
        Vertices in the code are in black.
        The number next to a vertex is its label.
        The set next to each vertex is its shadow on this code.}
\label{fig:ideg}
\end{figure}

\begin{thm}
Let {$n\ge9$}. Then $\id\big(\cg{n}{1,3}\big)=\ceil{4n/11}$ if $n\not\equiv8\mod11$,
and ${\ceil{4n/11}\le}\id\big(\cg{n}{1,3}\big)\le\ceil{4n/11}+1$ if $n\equiv8\mod11$.
\label{thm:idconst}
\end{thm}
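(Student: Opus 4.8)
\textbf{Proof proposal for Theorem~\ref{thm:idconst}.}

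The plan is to establish the two claimed bounds separately. The lower bound $\id\big(\cg{n}{1,3}\big)\ge\ceil{4n/11}$ for all $n\ge13$ is already delivered by Theorem~\ref{thm:idmain}: since the identifying number is an integer and $\id\big(\cg{n}{1,3}\big)\ge 4n/11$, we may round up to get $\id\big(\cg{n}{1,3}\big)\ge\ceil{4n/11}$. The small cases $9\le n<13$ are handled by the explicit computer-verified values quoted before the statement of $B_t$, so the lower bound is in hand across the whole range. All that remains is to supply matching \emph{upper} bounds by exhibiting identifying codes of the advertised sizes, and this is precisely what Table~\ref{tbl:id} provides. Thus the theorem reduces to verifying, for each residue class of $n$ modulo $11$, that the set listed in the table is indeed an identifying code, and then counting its cardinality.

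First I would verify the base building block: that $B_t=\big\{11i+j\st 0\le i\le t-1,\ j\in\{0,4,5,6\}\big\}$ is an identifying code in $\cg{11t}{1,3}$. The key local fact, reused throughout, is that in $\cg{n}{1,3}$ the closed neighbourhood $N[u]=\{u-3,u-1,u,u+1,u+3\}$ sits inside the length-$6$ window $[u-3,u+3]$, so whether a vertex's shadow is nonempty and what that shadow equals depends only on the pattern of code vertices within distance $3$. Because $B_t$ is periodic with period $11$ and uses the four-element block $\{0,4,5,6\}$ inside each length-$11$ stretch, one checks domination and the distinctness of shadows by examining a single period: list the eleven vertices $0,1,\dots,10$ (mod $11$), compute each $S_x=S\cap N[x]$ using the surrounding code vertices $\{\dots,-7,-6,-5,0,4,5,6,11,15,16,17,\dots\}$, and confirm all eleven shadows are nonempty and pairwise distinct. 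Distinctness across different periods then follows from periodicity together with the locality of shadows (two vertices in different periods are separated by more than $6$ in at least one direction, hence their shadows cannot coincide once intra-period distinctness is established). This is a finite, mechanical check of eleven cases.

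Next I would treat the ten remaining residues $n\equiv1,2,\dots,10\pmod{11}$ one at a time, using the corresponding table entry. In each case the code is $B_t$ (or $B_{t-1}$) with a small patch of extra vertices inserted near the ``seam'' where the periodic pattern is broken to accommodate the leftover $n-11t$ vertices. The verification strategy is identical to the base case but localized to the seam: away from the patch the code still looks locally like $B_t$, so those shadows are already known to be nonempty and distinct; only the $O(1)$ vertices within distance $3$ of the inserted block need fresh inspection, and one must check that no shadow near the seam accidentally duplicates a shadow elsewhere. For the cardinality count, $|B_t|=4t$, and one reads off from the table that each construction has size $\ceil{4n/11}$ except when $n\equiv8\pmod{11}$, where the listed patch contributes one extra vertex, giving $\ceil{4n/11}+1$; combined with the lower bound $\ceil{4n/11}$ this yields the stated sandwich for that residue. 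Finally the cases $9\le n<13$ not covered by the table (where $t$ would be too small for $B_t$ to be an identifying code in its own right, cf.\ the requirement $n\ge11$ for the general construction) are dispatched by the quoted computer values $\id\big(\cg{9}{1,3}\big)=\id\big(\cg{10}{1,3}\big)=4=\ceil{4\cdot9/11}=\ceil{4\cdot10/11}$, etc.

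The main obstacle is not conceptual but bookkeeping: one must carry out the seam verification cleanly for each of the eleven residue classes and, crucially, confirm that the small patches genuinely restore the identifying property rather than merely domination. The delicate points are the shadow-\emph{collision} checks at the two ends of each patch, where vertices just outside the inserted block could in principle share a shadow with vertices inside it; these are exactly the kinds of coincidences (e.g.\ $S_x=S_y$ for $x\ne y$) that the earlier lemmas' arguments repeatedly had to rule out. Because the paper explicitly omits these verifications as ``straight-forward'' and justifies them by the length-$6$ locality of neighbourhoods, I would present the base case $B_t$ in full as the representative computation and then assert the remaining cases follow by the same localized analysis, exactly as the surrounding text does.
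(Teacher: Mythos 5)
Your proposal matches the paper's treatment essentially exactly: the paper also proves Theorem~\ref{thm:idconst} by combining the lower bound of Theorem~\ref{thm:idmain} with the constructions of Table~\ref{tbl:id}, explicitly omits the verification that each listed set is an identifying code, and justifies the omission by the same locality observation you invoke (each closed neighbourhood lies in a window of length~$6$, so checking $B_t$ reduces to one period plus a seam analysis). Your cardinality bookkeeping is also correct: $|B_t|=4t$ and each table entry has size $\ceil{4n/11}$ except the residue $8$, where it is $\ceil{4n/11}+1$.

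One concrete slip, however, in your handling of the small cases: you assert that the lower bound for $9\le n<13$ is ``in hand'' from the computer-verified values quoted in the paper, but those values cover only $n=7,8,9,10$. For $n=11$ the bound $\id\ge\ceil{44/11}=4$ is not among the quoted values and is not covered by Theorem~\ref{thm:idmain} (stated only for $n\ge13$); it can be rescued by Proposition~\ref{prop:genboundi}, which gives $\id\ge 2\cdot11/6>3$. But for $n=12$ the claimed value is $\ceil{48/11}=5$, while Proposition~\ref{prop:genboundi} yields only $4$, Theorem~\ref{thm:idmain} does not apply, and no computed value is quoted --- so your argument (and, to be fair, the paper's own text) leaves the lower bound at $n=12$ unestablished; it needs either a separate finite check or an extension of the argument of Theorem~\ref{thm:idmain} to $n=12$ (its proof inspects $S\cap[-7,10]$, an $18$-vertex window, which wraps around when $n=12$, so the extension is not automatic). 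A second, minor imprecision: for cross-period distinctness of shadows it is not enough to check the eleven vertices of one period, since equal shadows $S_u=S_v$ force only $|u-v|\le6$, and such pairs can straddle a period boundary; the correct reduction is to check, up to the period-$11$ translation symmetry, all pairs $u\in\{0,\dots,10\}$, $v\in[u-6,u+6]$. Neither point changes the architecture of the proof, which is the paper's own.
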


For an identifying code in $\cg{\infty}{1,3}$ with density $4/11$, one may take
the code \[B_\infty=\big\{11i+j\st i\in\Z\text{ and }j\in\{0,4,5,6\}\big\}.\]

\section{Concluding remarks}

Determining locating and identifying numbers of general circulant graphs
remain open. In particular the circulant graphs $\cg{n}{1,d}$ with $d\ge4$
are of interest. For larger values of $d$, proofs similar to those presented
in this paper get too complicated, so a new approach seems necessary.
We close this article by two problems involving the only graphs $\cg{n}{1,3}$
whose exact locating/identifying number is not settled here.

\begin{pr}
Show that if $n\ge13$ and $n\equiv2\mod6$, the circulant graph $\cg{n}{1,3}$ does not
admit a locating code of size $\ceil{n/3}$.
\end{pr}

\begin{pr}
Show that if $n\equiv8\mod11$, the circulant graph $\cg{n}{1,3}$ does not
admit an identifying code of size $\ceil{4n/11}$.
\end{pr}

\def\soft#1{\leavevmode\setbox0=\hbox{h}\dimen7=\ht0\advance \dimen7
  by-1ex\relax\if t#1\relax\rlap{\raise.6\dimen7
  \hbox{\kern.3ex\char'47}}#1\relax\else\if T#1\relax
  \rlap{\raise.5\dimen7\hbox{\kern1.3ex\char'47}}#1\relax \else\if
  d#1\relax\rlap{\raise.5\dimen7\hbox{\kern.9ex \char'47}}#1\relax\else\if
  D#1\relax\rlap{\raise.5\dimen7 \hbox{\kern1.4ex\char'47}}#1\relax\else\if
  l#1\relax \rlap{\raise.5\dimen7\hbox{\kern.4ex\char'47}}#1\relax \else\if
  L#1\relax\rlap{\raise.5\dimen7\hbox{\kern.7ex
  \char'47}}#1\relax\else\message{accent \string\soft \space #1 not
  defined!}#1\relax\fi\fi\fi\fi\fi\fi}


\begin{thebibliography}{10}

\bibitem{ber04}
N.~Bertrand, I.~Charon, O.~Hudry, and A.~Lobstein.
\newblock Identifying and locating-dominating codes on chains and cycles.
\newblock {\em European Journal of Combinatorics}, 25:969--987, 2004.

\bibitem{bhl01}
U.~Blass, I.~Honkala, and S.~Litsyn.
\newblock Bounds on identifying codes.
\newblock {\em Discrete Mathematics}, 241(1--3):119--128, 2001.

\bibitem{carson95}
D.~I. Carson.
\newblock On generalized location-domination.
\newblock In {\em Graph Theory, Combinatorics and Applications, Proc. Seventh
  Quad. Internat. Conf.}, pages 161--179, New York, 1995. John Wiley and Sons,
  Inc.

\bibitem{clm09}
C.~Chen, C.~Lu, and Z.~Miao.
\newblock Identifying codes and locating-dominating sets on paths and cycles.
\newblock {\em Discrete Applied Mathematics}, 159:1540--1547, 2011.

\bibitem{chlz99}
G.~Cohen, I.~Honkala, A.~Lobstein, and G.~Zemor.
\newblock New bounds for codes identifying vertices in graphs.
\newblock {\em The Electronic Journal of Combinatorics}, 6, 1999.
\newblock \#R19.

\bibitem{css86}
C.~J. Colbourn, P.~J. Slater, and L.~K. Stewart.
\newblock Locating-dominating sets in series-parallel networks.
\newblock In {\em Proceedings of the 16th Annual Conference on Numerical
  Mathematics and Computing, Winnipeg, Manitoba 1986}, volume~56, pages
  135--162, 1987.

\bibitem{cranstonyu09}
D.~W. Cranston and G.~Yu.
\newblock A new lower bound on the density of vertex identifying codes for the
  infinite hexagonal grid.
\newblock {\em The Electronic Journal of Combinatorics}, 16, 2009.
\newblock \#R113.

\bibitem{exoo11}
G.~Exoo, V.~Junnila, and T.~Laihonen.
\newblock Locating-dominating codes in cycles.
\newblock {\em Australasian Journal of Combinatorics}, 49:177--194, 2011.

\bibitem{ejlr10}
G.~Exoo, V.~Junnila, T.~Laihonen, and S.~Ranto.
\newblock Improved bounds on identifying codes in binary hamming spaces.
\newblock {\em European Journal of Combinatorics}, 31(3):813--827, 2010.

\bibitem{fh98}
A.~Finbow and B.~Hartnell.
\newblock On locating-dominating sets and well-covered graphs.
\newblock {\em Congressus Numerantium}, 65:191--200, 1998.

\bibitem{gra06}
S.~Gravier, J.~Moncel, and A.~Semri.
\newblock Identifying codes of cycles.
\newblock {\em European Journal of Combinatorics}, 27:767--776, 2006.

\bibitem{hhh06}
T.~W. Haynes, M.~A. Henning, and J.~Howard.
\newblock Locating and total dominating sets in trees.
\newblock {\em Discrete Applied Mathematics}, 154(8), 2006.

\bibitem{honkala06}
I.~Honkala.
\newblock An optimal locating-dominating set in the infinite triangular grid.
\newblock {\em Discrete Mathematics}, 306(21):2670--2681, 2006.

\bibitem{hl06}
I.~Honkala and T.~Laihonen.
\newblock On locating-dominating sets in infinite grids.
\newblock {\em European Journal of Combinatorics}, 27(2), 2006.

\bibitem{howard04}
J.~M. Howard.
\newblock Locating and total dominating sets in trees.
\newblock Master's thesis, East Tennessee State University, 2004.

\bibitem{kcl98}
M.~G. Karpovsky, K.~Chakrabarty, and L.~B. Levitin.
\newblock On a new class of codes for identifying vertices in graphs.
\newblock {\em IEEE Transactions on Information Theory}, 44(2):599--611, 1998.

\bibitem{laihonen08}
T.~Laihonen.
\newblock On cages admitting identifying codes.
\newblock {\em European Journal of Combinatorics}, 29(3):737--741, 2008.

\bibitem{mullersereni09}
T.~M\"uller and J.-S. Sereni.
\newblock Identifying and locating-dominating codes in (random) geometric
  networks.
\newblock {\em Combinatorics, Probability and Computing}, 18:925--952, 2009.

\bibitem{rs84}
D.~F. Rall and P.~J. Slater.
\newblock On location-domination numbers for certain classes of graphs.
\newblock {\em Congressus Numerantium}, 45:97--106, 1984.

\bibitem{rob08}
D.~L. Roberts and F.~S. Roberts.
\newblock Locating sensors in paths and cycles: The case of 2-identifying
  codes.
\newblock {\em European Journal of Combinatorics}, 29:72--82, 2008.

\bibitem{slater88}
P.~J. Slater.
\newblock Dominating and reference sets in a graph.
\newblock {\em Journal of Mathematical Physics}, 22:445--455, 1988.

\bibitem{slater95}
P.~J. Slater.
\newblock Locating dominating sets and locating-dominating sets.
\newblock In {\em Graph Theory, Combinatorics and Applications, Proc. Seventh
  Quad. Internat. Conf.}, pages 1073--1079, New York, 1995. John Wiley and
  Sons, Inc.

\bibitem{slater02}
P.~J. Slater.
\newblock Fault-tolerant locating-dominating sets.
\newblock {\em Discrete Mathematics}, 249:179--189, 2002.

\bibitem{xu08}
M.~Xu, K.~Thulasiraman, and X.-D. Hu.
\newblock Identifying codes of cycles with odd orders.
\newblock {\em European Journal of Combinatorics}, 29:1717--1720, 2008.

\end{thebibliography}
\end{document}